\newcommand\ka{\kappa}
\renewcommand\H{{\mathcal{H}}}
\newcommand\R{{\mathbf{R}}}
\newcommand\E{{\mathcal{E}}}
\theoremstyle{plain}
  \newtheorem{theorem}[subsection]{Theorem}
  \newtheorem{proposition}[subsection]{Proposition}
  \newtheorem{lemma}[subsection]{Lemma}
\theoremstyle{remark}
  \newtheorem{remark}[subsection]{Remark}
\theoremstyle{definition}
\begin{document}
\title[]{A remark on the two dimensional water wave problem with surface tension}

\author{Shuanglin Shao}
\address{Department of Mathematics, University of Kansas, Lawrence, KS 66045}
\email{slshao@ku.edu}

\author{Hsi-Wei Shih}
\address{Department of Mathematics, National Cheng Kung University, No. 1 Dasyue Rd., Tainan City 70101, Taiwan}
\email{shihhw@math.ncku.edu.tw}

\vspace{-1in}

\begin{abstract}
We consider the motion of a two-dimensional interface between air (above) and an irrotational, incompressible, inviscid, infinitely deep water (below), with surface tension present. We propose a new way to reduce the original problem into an equivalent quasilinear system which are related to the interface's tangent angle and a quantity related to the difference of tangential velocities of the interface in the Lagrangian and the arc-length coordinates. The new way is relatively simple because it involves only taking differentiation and the real and the imaginary parts. Then if assuming that waves are periodic, we establish a priori energy inequality.
\end{abstract}
\keywords{water waves; surface tension; the a priori energy inequality.}

\maketitle

\section{Introduction}\label{sec:intro}
The two dimensional water wave problem with surface tension concerns the motion of the interface which separates an inviscid, incompressible and irrotational fluid, under the influence of gravity, from the air region of zero density. It is assumed the water region is below the air. Assume that the density of the water is $1$, the gravitational field is $-\textbf{k}$, where $\textbf{k}$ is the unit vector pointing in the upward vertical direction, and at time $t\ge 0$, the free interface is $\Sigma(t)$, and the fluid region occupies $\Omega(t)$. When the surface tension is present, the motion of the fluid is described by
\begin{equation}\label{eq:Euler-coordinate}
\begin{cases}
& v_t+v\cdot \nabla v=-\textbf{k}-\nabla P, \text{ on }\Omega(t), t\ge 0,\\
& \operatorname{div} v=0, \,\operatorname{curl} v=0,  \text{ on }\Omega(t), t\ge 0,\\
& (1,v) \text{ is tangent to the free surface } (t,\Sigma(t)).
\end{cases}
\end{equation}
Here $v$ is the fluid velocity.  $P$ is the fluid pressure, for which we have  $P=\sigma H$ on the interface, where  $H$ denotes the curvature of the interface $\Sigma$
and $\sigma$ the surface tension, which we will set to $\sigma = 1$. We assume that the fluid is of infinite depth. We also assume that the free interface $\Sigma(t)$ is described by
$z = z(\alpha, t)$, where $\alpha\in \R$ is the Lagrangian coordinate, i.e. $z_{t}(\alpha,t)=v(z(\alpha,t),t)$ is the fluid velocity on the interface, $z_{tt}(\alpha, t) =(v_t+v\cdot\nabla v) (z(\alpha,t))$ is the acceleration.

We will regard $\mathbb{R}^2$ as a complex plane and use the same notion for a complex number $z=x+iy$ and a point $z=(x,y)$. Hence  $z(\alpha,t)=(x(\alpha,t),y(\alpha,t)),\, \alpha\in \R,\,$ is a parametrization of the free interface $\Sigma(t)$ (a curve) at time $t$ in the Lagrangian coordinate $\alpha$. The Hilbert transform on $\Sigma(t)$ associated with the Lagrangian parametrization is defined by
\begin{equation}\label{eq:cauchy-transform}
\mathfrak{H}f(\alpha,t)=\frac 1{\pi i}\operatorname{p.v.}\int \frac {f(\beta,t)z_\beta(\beta,t)}{z(\alpha,t)-z(\beta,t)}d\beta.
\end{equation}

Then the system \eqref{eq:Euler-coordinate} is equivalent to the following system on the interface $\Sigma(t)$ \cite{Sijue:1997:2d-local, Sijue:1999:3d-local, Sijue:2009:2d-almost-global}:
\begin{equation}\label{eq:complex-coordinate}
\begin{cases}
& z_{tt}+i=-\nabla P,\\
&\bar{z}_t=\mathfrak{H}\bar{z}_t.
\end{cases}
\end{equation}
We will assume that the non self-intersecting condition on the interface,
\begin{equation}\label{eq-non-intersect-time-0}
\left|\frac{z(\alpha,0)-z(\beta,0)}{\alpha-\beta}\right|>c>0.
\end{equation}
We will assume that the Taylor sign condition  \cite{Taylor:1950:taylor-sign}
\begin{equation}\label{eq-taylor-sign}
a:=-\frac {\partial P}{\partial n} >0,
\end{equation}
where the $\frac {\partial }{\partial n}$ denotes the normal derivative pointing to the air region along the interface. When the surface tension is neglected and the water is of infinite depth, Wu \cite{Sijue:1997:2d-local} proved that the Taylor sign condition \eqref{eq-taylor-sign} holds as long as the surface of the water wave does not intersect (see \eqref{eq-non-intersect-time-0}), see also \cite{Sijue:1999:3d-local} for another intuitive proof by using the maximum principle. The validity of this condition is then used in a series of papers by Wu \cite{Sijue:1997:2d-local, Sijue:1999:3d-local, Sijue:2009:2d-almost-global, Sijue:2009:3d--global}.

In \cite{Ambrose-Masmoudi:2005:zero-surface-tension-2d-water-wave, Ambrose-Masmoudi:2007:LWP-3d-vortex-sheet-with-surface-tension}, Ambrose and Masmoudi studied the two and three dimensional periodic water wave problem with surface tension; they proved that solutions for water wave with surface tension not only locally exist by using energy method, but also converges to those without surface tension when the surface tension coefficient tends to zero. The method in \cite{Ambrose-Masmoudi:2005:zero-surface-tension-2d-water-wave} was inspired by the numerical work \cite{Baker-Meiron-Orszag:1982:generalized-vortex-sheet, HLS:1994:removing-stiffness-interfacial-flows-with-surface-tension, HLS:1997:vortex-sheet-with-surface-tension}, where the authors efficiently compute the vortex sheets in the presence of surface tension. Rather than using Euclidean coordinates, in these works the authors used the curve's tangent angle and (normalized) arc length to parameterize the curve. Ambrose \cite{Ambrose:2003:LWP-vortex-sheet-with-surface-tension} used the same formulation to establish the local wellposedness for the vortex problem with the surface tension. The work by Ambrose and Masmoudi \cite{Ambrose-Masmoudi:2005:zero-surface-tension-2d-water-wave} followed along this line, where the authors treated the water wave problem as one on vorticity on the interface; the Biot-Savart law and the Birkhoff-Rott integrals guarantee that the velocity of the fluids inside the water region and on the free interface can be recovered from vorticity.

Inspired by the reformulations of the water wave problem in the work \cite{Ambrose-Masmoudi:2005:zero-surface-tension-2d-water-wave, Sijue:2009:2d-almost-global}, in this paper we propose a new way to reduce the system \eqref{eq:complex-coordinate} into an equivalent quasilinear system, which seems to be of independent interests.

\begin{proposition}\label{thm-coordinate-changes}
Let $z(\alpha,t)$ be a parametrization of the interface $\Sigma(t)$ in Lagrangian variable $\alpha$ at time $t$. Let $\ka$ be the change of coordinates from Lagrangian to arc-length $s$, and set $u:=\delta_s$ with $\delta:=\ka_t\circ \ka^{-1}$. Then the system \eqref{eq:complex-coordinate} is reduced to the following system in the Lagrangian coordinate,
\begin{equation}\label{eq-coordinate-invariant-form}
\begin{cases}
& \theta_t =i\mathfrak{H}(u\circ \ka) +\phi,\\
& (u\circ \ka)_t=\bigl(\frac {\partial_\alpha}{|z_\alpha|}\bigr)^3\theta -a\frac {\partial_\alpha}{|z_\alpha|}\theta +\psi
\end{cases}
\end{equation} where $\phi,\,\psi$ are error terms.
\end{proposition}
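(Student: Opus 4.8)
The plan is to operate directly on \eqref{eq:complex-coordinate}, extracting the two scalar equations of \eqref{eq-coordinate-invariant-form} from a single algebraic identity together with the momentum equation, in each case by separating real and imaginary parts. First set up the geometric dictionary: write $z_\alpha=|z_\alpha|\,e^{i\theta}$, so that $e^{i\theta}$ is the unit tangent, $ie^{i\theta}$ the unit normal pointing into the air, $\partial_s:=|z_\alpha|^{-1}\partial_\alpha$ the arc-length derivative, and on $\Sigma(t)$ the pressure $P=\sigma H$ with $\sigma=1$ equals $-\partial_s\theta$ — that is, $H=-\partial_s\theta$ is the curvature measured against the normal pointing into the fluid. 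Differentiating $z_\alpha=|z_\alpha|e^{i\theta}$ in $t$ and dividing by $z_\alpha$ gives the exact identity
\begin{equation*}
\frac{z_{t\alpha}}{z_\alpha}=\partial_t\log|z_\alpha|+i\theta_t .
\end{equation*}
Since $\ka_\alpha=|z_\alpha|$, the chain rule applied to $\delta=\ka_t\circ\ka^{-1}$ and $u=\delta_s$ gives $u\circ\ka=\ka_{t\alpha}/\ka_\alpha=\partial_t\log|z_\alpha|$, so the real and imaginary parts of the identity above are precisely $\operatorname{Re}(z_{t\alpha}/z_\alpha)=u\circ\ka$ and $\operatorname{Im}(z_{t\alpha}/z_\alpha)=\theta_t$; dividing $z_{t\alpha}$ by $|z_\alpha|$ instead gives $\partial_s z_t=e^{i\theta}(u\circ\ka+i\theta_t)$. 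Thus one algebraic relation already produces both unknowns.

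For the $\theta$-equation I use the holomorphy relation $\bar z_t=\mathfrak H\bar z_t$ of \eqref{eq:complex-coordinate}. Differentiating it in $\alpha$, dividing by $|z_\alpha|$, commuting $\mathfrak H$ past $\partial_\alpha$ and past multiplication by $|z_\alpha|^{-1}$, then substituting $\partial_s\bar z_t=\overline{\partial_s z_t}=e^{-i\theta}(u\circ\ka-i\theta_t)$ and commuting $\mathfrak H$ past multiplication by $e^{i\theta}$, one arrives at
\begin{equation*}
u\circ\ka-i\theta_t=\mathfrak H\bigl(u\circ\ka-i\theta_t\bigr)+(\text{error}).
\end{equation*}
Each commutator $[\partial_\alpha,\mathfrak H]$, $[|z_\alpha|^{-1},\mathfrak H]$, $[e^{i\theta},\mathfrak H]$ has a bounded kernel under the non-self-intersection hypothesis \eqref{eq-non-intersect-time-0}, hence gains a derivative relative to the term it corrects and is absorbed into the error. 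Finally $\mathfrak H$ differs from the constant-coefficient Hilbert transform by a smoothing operator, so it sends a real function to a purely imaginary one up to an error and $i\mathfrak H$ coincides with the real Hilbert transform up to an error; taking imaginary parts then isolates $\theta_t=i\mathfrak H(u\circ\ka)+\phi$, with $\phi$ collecting the accumulated errors.

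For the $u\circ\ka$-equation I differentiate $u\circ\ka=\operatorname{Re}(z_{t\alpha}/z_\alpha)$ in $t$:
\begin{equation*}
(u\circ\ka)_t=\operatorname{Re}\frac{z_{tt\alpha}}{z_\alpha}-\operatorname{Re}\Bigl(\frac{z_{t\alpha}}{z_\alpha}\Bigr)^{2},
\end{equation*}
the last term being the quadratic remainder $\theta_t^{2}-(u\circ\ka)^{2}$. Into the first term I insert the momentum equation: on $\Sigma(t)$, $z_{tt}=-i-\nabla P$, and decomposing $-\nabla P$ into its normal part $a=-\partial_n P$ (the Taylor coefficient; see \eqref{eq-taylor-sign}) and its tangential part $-\partial_s(P|_{\Sigma})=\partial_s^{2}\theta$ gives $z_{tt}=-i+e^{i\theta}\bigl(ia+\partial_s^{2}\theta\bigr)$. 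Differentiating this in $\alpha$, dividing by $z_\alpha=|z_\alpha|e^{i\theta}$, and taking the real part, one finds it reduces to exactly $\partial_s^{3}\theta-a\,\partial_s\theta$ — the third-order term being the surface-tension term, produced precisely because $P|_{\Sigma}$ is the curvature — while every sub-principal contribution generated along the way is purely imaginary and drops out. Since $\partial_s=\partial_\alpha/|z_\alpha|$ this reads $\bigl(\partial_\alpha/|z_\alpha|\bigr)^{3}\theta-a\,(\partial_\alpha/|z_\alpha|)\theta$, so $(u\circ\ka)_t=\bigl(\partial_\alpha/|z_\alpha|\bigr)^{3}\theta-a\,(\partial_\alpha/|z_\alpha|)\theta+\psi$ with $\psi=\theta_t^{2}-(u\circ\ka)^{2}$.

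The main obstacle is the analysis behind the $\theta$-equation: one must verify that $[\partial_\alpha,\mathfrak H]$, $[|z_\alpha|^{-1},\mathfrak H]$, $[e^{i\theta},\mathfrak H]$, and the difference of $\mathfrak H$ from the flat Hilbert transform all gain at least one derivative, so that everything collected in $\phi$ is of strictly lower order than $\mathfrak H(u\circ\ka)$ and the reduced system \eqref{eq-coordinate-invariant-form} is genuinely quasilinear; this is where \eqref{eq-non-intersect-time-0} enters. The $u\circ\ka$-equation, by contrast, is an exact algebraic computation once the momentum equation and the curvature identity are substituted, the only delicacy being the bookkeeping of signs forced by the choice of normal and the curvature convention.
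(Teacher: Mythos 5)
Your proposal is correct and follows essentially the same route as the paper: both start from the identity $z_{t\alpha}/z_\alpha=u\circ\ka+i\theta_t$, obtain the $\theta$-equation by exploiting $\bar z_t=\mathfrak H\bar z_t$ together with the commutator identity $\partial_\alpha\mathfrak Hf=z_\alpha\mathfrak H(f_\alpha/z_\alpha)$ and a commutator with $e^{2i\theta}=z_\alpha/\bar z_\alpha$ (your $e^{2i\theta}[\mathfrak H,e^{-2i\theta}]$ is algebraically the same as the paper's $-[\mathfrak H,z_\alpha/\bar z_\alpha]$), and obtain the $u$-equation by inserting $z_{tt}=-i+e^{i\theta}(ia+\partial_s^2\theta)$. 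Your derivation of the second equation is slightly more streamlined — differentiating the scalar identity $u\circ\ka=\operatorname{Re}(z_{t\alpha}/z_\alpha)$ in $t$ directly, rather than expanding $[\tfrac{1}{|z_\alpha|},\partial_t^2]z_\alpha$ as the paper does — but it yields the identical $\psi=\theta_t^2-(u\circ\ka)^2$, which equals the paper's $-\operatorname{Re}\bigl(e^{-i\theta}z_{t\alpha}|z_\alpha|^{-1}(u\circ\ka)\bigr)+\theta_t^2$.
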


\begin{remark}\label{re-1}
The system \eqref{eq-coordinate-invariant-form} is stated in the Lagrangian coordinate but it is coordinate-invariant because of the coordinate-invariant derivative $\partial_\alpha/|z_\alpha|$. Note that under the change of coordinates from the Lagrangian coordinate to the arc-length coordinate, $$\frac{\partial_\alpha}{|z_\alpha|}\mapsto \partial_s, \partial_t\mapsto \partial_t+\delta \partial_s,$$
we see that the system \eqref{eq-coordinate-invariant-form} recovers that in \cite[Eq. (2.38)]{Ambrose-Masmoudi:2005:zero-surface-tension-2d-water-wave} in the arc-length coordinate:
$$
\begin{cases}
\theta_t &=i\mathcal{H}(u) -\delta u+\bar{\phi},\\
u_t&=\partial_s^3 \theta-a \partial_s \theta+\delta \partial_s u+\bar{\psi}
\end{cases}
$$ for some error terms $\bar{\phi}$ and $\bar{\psi}$, where $\mathcal{H}$ is the standard Hilbert transform. Ambrose and Masmoudi \cite{Ambrose-Masmoudi:2005:zero-surface-tension-2d-water-wave} deduced this system by regarding
the water wave problem as a special vortex sheet problem with zero upper density \cite{Baker-Meiron-Orszag:1982:generalized-vortex-sheet, HLS:1994:removing-stiffness-interfacial-flows-with-surface-tension, HLS:1997:vortex-sheet-with-surface-tension}. Their derivation heavily relied on the machinery of the Birkhoff-Rott integrals and the Biot-Savart law. Our way is relatively simple as it only involves taking differentiation and real and imaginary parts. For details, see Section \ref{sec:new-reduction}.
\end{remark}

\begin{remark}\label{re-2}
Roughly speaking, the system \eqref{eq-coordinate-invariant-form} is on $\ln z_\alpha$ \footnote{This was communicated to us by Sijue Wu.}. This can be seen as follows: $$\ln z_\alpha =\ln |z_\alpha|+i\theta =\ln \ka_\alpha +i\theta.$$ Then after taking time derivative,
$$ \partial_t \ln z_\alpha =\frac {z_{\alpha t}}{z_\alpha} =\frac {\ka_{\alpha t}}{\ka_\alpha}+i\theta_t =(\ka_t\circ \ka^{-1})_s \circ \ka+i\theta_t. $$
Recall that $\delta=\ka_t\circ \ka^{-1}$ and $u=\delta_s$, then
\begin{equation}\label{eq-a46}
\partial_t \ln z_\alpha =u \circ \ka+i\theta_t.
\end{equation}
The real part and the imaginary parts are $u \circ \ka$ and $\theta_t$, which are the quantities studied in \eqref{eq-coordinate-invariant-form}.
\end{remark}

\begin{remark}
Here is a heuristic explanation of the structure of the first equation in \eqref{eq-coordinate-invariant-form}. If the left hand of \eqref{eq-a46} were holomorphic in $\Omega(t)$, then as the real and the imaginary parts of its boundary value on $\Sigma(t)$,  $u\circ \ka$ and $\theta_t$ would be connected by the standard Hilbert transform \cite[Charpter 5, Lemma 2.6]{Stein-Weiss:1971:fourier-analysis}. In the first equation in \eqref{eq-coordinate-invariant-form}, they are connected via $\mathfrak{H}$ up to some error terms.
\end{remark}

Next we restrict our attention to periodic waves: if the position of the surface is given by $(x(\alpha,t), y(\alpha,t))$, then in the complex plane, if $z(\alpha,t)=x(\alpha,t)+iy(\alpha,t)$, the periodic functions we consider satisfy $z(\alpha+2\pi)=z(\alpha)+2\pi$. Our second result is for periodic waves, which is to establish an \emph{a priori} energy inequality for \eqref{eq-coordinate-invariant-form} in the arc-length coordinate. This result was essentially in \cite{Ambrose-Masmoudi:2005:zero-surface-tension-2d-water-wave, Ambrose-Masmoudi:2007:LWP-3d-vortex-sheet-with-surface-tension}. 

\begin{theorem}\label{thm-energy-inequality}
Let $\mathcal{E}(t)$ be some energy for the system in \eqref{eq-coordinate-invariant-form} in the arc-length coordinate. Then $\frac {d \E(t)}{dt} \le C(\E(t))$, where $C(\E(t))$ denotes a polynomial of $\E(t)$.
\end{theorem}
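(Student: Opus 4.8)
The plan is to rerun the energy method of \cite{Ambrose-Masmoudi:2005:zero-surface-tension-2d-water-wave} for the formulation of Remark \ref{re-1}; since that formulation coincides with theirs up to the names of the error terms, I expect the estimate to transfer with only cosmetic changes, and I describe its shape.

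\textbf{Choice of the energy.} Work in the arc-length coordinate, where the system is
$$\theta_t = i\mathcal{H}(u) - \delta u + \bar{\phi},\qquad u_t = \partial_s^{3}\theta - a\,\partial_s\theta + \delta\,\partial_s u + \bar{\psi}.$$
Freezing $a$ and dropping $\delta,\bar{\phi},\bar{\psi}$, the linearization $\theta_t = i\mathcal{H}u$, $u_t = \partial_s^{3}\theta - a\partial_s\theta$ conserves, for each integer $k\ge 0$, the quadratic form
$$E_k := \int_{\T}\bigl|\partial_s^{k}u\bigr|^{2}\,ds + \int_{\T}\bigl|\Lambda^{3/2}\partial_s^{k}\theta\bigr|^{2}\,ds + \int_{\T} a\,\bigl|\Lambda^{1/2}\partial_s^{k}\theta\bigr|^{2}\,ds,\qquad \Lambda := |\partial_s|,$$
the first two terms being matched by the Hilbert-transform identities (skew-adjointness of $i\mathcal{H}$, $(i\mathcal{H})^{2}=-I$, and $-(i\mathcal{H})\partial_s^{3}=\Lambda^{3}\ge 0$) and the last being matched to the dispersive term $-a\partial_s\theta$. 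Fixing $N$ large, I would take
$$\E(t) := \sum_{k=0}^{N}E_k(t) + \Bigl\|\tfrac1a\Bigr\|_{L^\infty}^{2} + \|z_\alpha\|_{L^\infty}^{2} + \Bigl\|\tfrac{1}{|z_\alpha|}\Bigr\|_{L^\infty}^{2} + \cdots$$
that is, the Sobolev energy augmented by exactly the quantities (the quantitative Taylor sign lower bound from \eqref{eq-taylor-sign}, the arc-length element, the reciprocal of the chord--arc constant in \eqref{eq-non-intersect-time-0}) that must be controlled for the reduction \eqref{eq-coordinate-invariant-form} to be valid; these are the quantities retained in \cite{Ambrose-Masmoudi:2005:zero-surface-tension-2d-water-wave}.

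\textbf{Cancellation at top order.} Differentiating $\E$ in time and substituting the equations, the only contributions carrying $k+3$ derivatives of $\theta$ or $k+1$ derivatives of $u$ are $2\int\partial_s^{k}u\,\partial_s^{k+3}\theta$ from $\tfrac{d}{dt}\int|\partial_s^{k}u|^{2}$ and $2\int\Lambda^{3/2}\partial_s^{k}\theta\,\Lambda^{3/2}\partial_s^{k}(i\mathcal{H}u)$ from $\tfrac{d}{dt}\int|\Lambda^{3/2}\partial_s^{k}\theta|^{2}$; after integrations by parts and use of the identities above (exact on $\T$, the symbols being constant) these sum to zero. The same frozen-coefficient bookkeeping cancels the leading part of $\tfrac{d}{dt}\int a|\Lambda^{1/2}\partial_s^{k}\theta|^{2}$ against the $-a\partial_s\theta$ term of the $u$-equation, leaving the harmless $\int a_t\,|\Lambda^{1/2}\partial_s^{k}\theta|^{2}\le\|a_t\|_{L^\infty}\|1/a\|_{L^\infty}E_k$. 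The transport terms $\delta\partial_s u$ and $-\delta u$ yield, after one integration by parts, $-\tfrac12\int\delta_s\,|\partial_s^{k}u|^{2}$ and analogues; everything else is a commutator $[\partial_s^{k},\cdot]$ or $[\Lambda^{3/2}\partial_s^{k},\cdot]$ falling on a coefficient, hence of strictly lower total order.

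\textbf{Bounding the remainder and the main difficulty.} It then remains to dominate each surviving term by a polynomial $C(\E(t))$. The commutators and products are controlled by Kato--Ponce / fractional Leibniz inequalities and Sobolev embedding on $\T$, using that $\theta$, $u$, and $\delta$ (an antiderivative of $u$, fixed modulo the constant pinned down by the periodic normalization) together with their derivatives up to order $\sim N$ are controlled by $\E$. The terms in $a$ and $a_t$ require the formula expressing the Taylor coefficient through the interface and the velocity --- schematically $(I-\mathcal{K})a = Q(z_\alpha,z_t;\partial)$, with $\mathcal{K}$ smoothing and $Q$ a quadratic nonlinearity --- inverted using $\|1/a\|_{L^\infty}\le\E^{1/2}$ and differentiated in $t$ together with the evolution equations to give $\|a\|_{H^{s}}+\|a_t\|_{L^\infty}\le C(\E)$. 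The error terms are handled by returning to their explicit expressions from Section \ref{sec:new-reduction} and checking they contain no $\partial_s^{k+1}u$ or $\partial_s^{k+3}\theta$ with a non-negligible coefficient, so $\|\bar{\phi}\|_{H^{k+3/2}}+\|\bar{\psi}\|_{H^{k}}\le C(\E)$; and the low-order geometric quantities are differentiated directly through the evolution of $z$. Summing over $0\le k\le N$ yields $\tfrac{d\E}{dt}(t)\le C(\E(t))$. The dispersive cancellation of the middle step is structurally clean because in the arc-length coordinate $i\mathcal{H}$ and $\partial_s$ have constant Fourier symbols on $\T$; I expect the main obstacle to be precisely the two estimates just flagged --- the a priori bounds on $a$, $a_t$ and on the implicitly defined $\bar{\phi},\bar{\psi}$ (and, were one to remain in the Lagrangian form \eqref{eq-coordinate-invariant-form}, the commutators between $\partial_\alpha/|z_\alpha|$ and the variable kernel of $\mathfrak{H}$) --- which is also where the analytic weight lies in \cite{Ambrose-Masmoudi:2005:zero-surface-tension-2d-water-wave}.
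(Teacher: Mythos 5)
Your main energy $\sum_{k} E_k$, with the three balanced pieces $\|\partial_s^k u\|_{L^2}^2$, $\|\Lambda^{3/2}\partial_s^k\theta\|_{L^2}^2$, and $\int a|\Lambda^{1/2}\partial_s^k\theta|^2$, is precisely the paper's leading block $\E_k^1$ (equation \eqref{eq-energy-subterms}, with $D=\H\partial_s=\Lambda$), and your verification of the frozen-coefficient cancellation at top order is sound. The gap lies in the sentence dismissing everything else as ``a commutator $\ldots$ falling on a coefficient, hence of strictly lower total order.'' That is not true for the transport term $-\delta\,\partial_s\theta$ in the $\theta$-equation (note the equation from Remark \ref{re-1} that you started from has a typo: the transport part is $-\delta\,\partial_s\theta$, as in \eqref{eq-arclength}, not $-\delta u$). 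When $\Lambda^{3/2}\partial_s^k$ (in the paper's bookkeeping, $\partial_s^{k+1}$) falls entirely on $\delta$ inside $\Lambda^{3/2}\partial_s^k(\delta\theta_s)$, one produces a term of the form $\langle \partial_s^{k}u,\ \theta_s\,\partial_s^{k+1}\theta\rangle$; a second copy of the same term comes from $-\langle \partial_s^{k-1}u,\ D\partial_s^{k-1}(a\theta_s)\rangle$ after substituting the formula $a_s=\H(u_t)+\tilde\omega$ from \eqref{eq-derivative-of-Taylor-sign}. The result (equation \eqref{eq-31}) is
\begin{equation*}
\frac{d\E_k^1}{dt}=-2\,\bigl\langle \partial_s^{k}u,\ \theta_s\,\partial_s^{k+1}\theta\bigr\rangle + C(\E).
\end{equation*}
At the top level this pairs $\partial_s^k u\in H^{-1/2}$ against $\theta_s\,\partial_s^{k+1}\theta\in L^2$, and no amount of Kato--Ponce or integration by parts brings it under $C(\E)$: it is $1/2$ derivative over budget on $u$. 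It is \emph{not} a ``strictly lower order'' remainder; it is the genuine quasilinear obstruction of this problem.

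The paper closes the estimate by augmenting the energy with the compensators
\begin{equation*}
\E_k^2 = \bigl\langle \partial_s^{k-1}u,\ (10\|\theta_s(0)\|_{L^\infty}-\theta_s)\,\partial_s^{k-1}u\bigr\rangle,
\qquad
\E_k^3 = 10\|\theta_s(0)\|_{L^\infty}\,\bigl\langle \partial_s^{k}\theta,\ D\partial_s^{k}\theta\bigr\rangle,
\end{equation*}
whose time derivatives are arranged (equations \eqref{eq-32}--\eqref{eq-35}) so that $\frac{d}{dt}(\E_k^1+\E_k^2+\E_k^3)=C(\E)$: $\E_k^2$ regenerates $+2\langle\partial_s^k u,\theta_s\partial_s^{k+1}\theta\rangle$ to kill the offending term, while $\E_k^3$ absorbs the artificial constant $10\|\theta_s(0)\|_{L^\infty}$ that was inserted to keep $\E_k^2$ nonnegative for a short time. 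Your proposal has neither of these pieces, so the a priori inequality as you outline it does not close. Two smaller remarks: the energy you leave as ``$\cdots$'' needs the vortex density $\gamma$ (with $\sum_{k\le r-1}\|\partial_s^k\gamma\|^2$) rather than pointwise geometric bounds, because $\delta$, $U$ and $T$ are reconstructed from $\gamma$ via the Biot--Savart/Birkhoff--Rott relations of Lemma \ref{le-biot-savart} and their evolutions enter \eqref{eq-57}; and you correctly flag the bounds on $a$, $a_t$, $\tilde\phi$, $\tilde\psi$, $\tilde\omega$ as where the analytic work lies (Propositions \ref{prop-error-theta-t}--\ref{prop-error-a-s}), but the $\E_k^2,\E_k^3$ cancellation is a separate and equally essential step.
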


\begin{remark}\label{re-9}
As in \cite{Ambrose-Masmoudi:2005:zero-surface-tension-2d-water-wave}, in order to establish this \emph{a priori} energy inequality, we need certain regularity assumption on the initial data, which is by no means optimal.
\end{remark}

The local wellposedness was proved in \cite{Ambrose:2003:LWP-vortex-sheet-with-surface-tension,Ambrose-Masmoudi:2005:zero-surface-tension-2d-water-wave}. Recently, the global Cauchy problem for two or three dimensional water wave with or without surface tension has been received a lot of attention. We refer interested readers to \cite{Deng-Ionescu-Pausader-Pusateri:2016:gravity-capillary-water-waves-3d,Ifrim-Tataru:2014:water-waves-2d-holomorphic-coordinates,Ionescu-Pusateri:2014:gravity-water-waves-2d} and the references therein.

This paper is organized as follows: In section \ref{sec:notions-lemmas}, we introduce some notation and cite several preliminary lemmas which in the current form are taken mostly from \cite{Sijue:1999:3d-local} and \cite{Sijue:2009:2d-almost-global}. In Section \ref{sec:new-reduction}, we derive \eqref{eq-coordinate-invariant-form} on the curve's tangent angle and $u$ from the system \eqref{eq:complex-coordinate}. In Section \ref{sec:taylor-sign}, we derive an explicit formula for $\partial_\alpha a/|z_\alpha|$ and compute $a-1$ and $a_t$. In Section \ref{sec:pass2arc-length+energy}, we establish \emph{a priori} energy inequality for \eqref{eq-coordinate-invariant-form} if assuming the waves are periodic. Then in Section \ref{sec:serveal-identities}, we discuss the several useful identities which build up the connection between the fluid velocity and the vortex density.

\textbf{Acknowledgments.} The authors would like to thank Sijue Wu for posing this question, and Markus Keel for many helpful discussions during the early preparation of this paper. 

\section{notation and preliminary lemmas}\label{sec:notions-lemmas}
We fix the notation that $\alpha $ denotes the Lagrangian coordinate and $s$ is the arc-length coordinate. The functions $z(\alpha,t)$ and $\xi(s,t)$ denote points on the interface $\Sigma(t)$; $\theta$ denotes the angle that $\Sigma(t)$ makes with the positive $x$-axis. Define the commutator $[A,B]=AB-BA.$

We introduce the double layered potential operator and its adjoint in $\R^2$. Assume $\Omega$ is a $C^2$ domain with boundary $\Sigma$ and outer unit vector $\vec n$. The double layered potential operator $\mathcal{K}$ is defined: for scalar-valued functions $f$ on $\Sigma$,
\begin{equation}\label{eq-double-lp}
\mathcal{K}f(\xi)=\frac {-1}{\pi} \operatorname{p.v.} \int_\Sigma \frac {(\xi-\eta)\cdot \vec n(\eta)}{|\xi-\eta|^2} f(\eta)dS(\eta), \text{ for }\xi\in \Sigma.
\end{equation}where $dS$ denotes the surface measure on $\Sigma$. Let $L^2(\Sigma,dS)$ be the $L^2$ space with respect to the surface measure $dS$. We denote $\mathcal{K}^*$ be the adjoint of $\mathcal{K}$ in $L^2$, then
\begin{equation}\label{eq-adjoint-double-lp}
\mathcal{K}^*f(\xi)=\frac {1}{\pi} \operatorname{p.v.} \int_\Sigma \frac {(\xi-\eta)\cdot \vec n(\xi)}{|\xi-\eta|^2} f(\eta)dS(\eta), \text{ for }\xi\in \Sigma.
\end{equation}
We recall a theorem due to Verchota  \cite{Verchota-1984-Layer-potential} and Kenig \cite{Kenig-book-1994-Harmonics-Anal-Elliptic-BV}; see a nice discussion of this theorem in \cite{Christ:1990:CBMS-book}. We take the form stated in \cite{Sijue:1999:3d-local}.
\begin{theorem}\label{thm-Verchota}
Assume that $\Omega$ and $\Omega^c$ are unbounded, connected Lipschitz domains, and $\Sigma$ approaches plane $x_2=0$ at infinity. Then $I\pm \mathcal{K}$: $L^2(\Sigma, dS)\to L^2(\Sigma, dS)$ and their adjoints $I\pm \mathcal{K}^*$ are invertible.
\end{theorem}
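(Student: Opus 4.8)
The plan is to deduce the invertibility of all four operators from a single \emph{a priori} lower bound
\[
\|f\|_{L^{2}(\Sigma,dS)}\ \le\ C\,\bigl\|(I\pm\mathcal{K}^{*})f\bigr\|_{L^{2}(\Sigma,dS)},\qquad f\in L^{2}(\Sigma,dS),
\]
with $C$ depending only on the Lipschitz character of $\Sigma$, and then to close the argument by soft functional analysis. Three ingredients enter. First, the $L^{2}$-boundedness of $\mathcal{K}$ and $\mathcal{K}^{*}$: parametrizing $\Sigma$ as a Lipschitz graph, the kernel $(\xi-\eta)\cdot\vec n(\eta)/|\xi-\eta|^{2}$ in \eqref{eq-double-lp} is the imaginary part of the Cauchy kernel $1/(\xi-\eta)$, so this boundedness is precisely the Calder\'on--Coifman--McIntosh--Meyer theorem on the Cauchy integral along Lipschitz curves; the same theory yields the $L^{2}$ bounds for the non-tangential maximal functions of the layer potentials. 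Second, the jump relations: writing $E$ for the fundamental solution of $-\Delta$ on $\R^{2}$ and $\mathcal{S}f(X)=\int_{\Sigma}E(X-Y)f(Y)\,dS(Y)$ for the single layer potential, the tangential derivative of $\mathcal{S}f$ is continuous across $\Sigma$ while $\partial_{\nu}^{\pm}\mathcal{S}f=(\mp\tfrac12 I+\mathcal{K}^{*})f$ (with the normalization of \eqref{eq-double-lp} the $\tfrac12$ is replaced by $1$), and the double layer potential $\mathcal{D}f(X)=\int_{\Sigma}\partial_{\nu(Y)}E(X-Y)f(Y)\,dS(Y)$ has continuous normal derivative across $\Sigma$ with trace jump $(\mathcal{D}f)^{+}-(\mathcal{D}f)^{-}=f$. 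Third, the Rellich identity, used next.

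For the \emph{a priori} bound I would invoke the Rellich identity: for $u$ harmonic in a Lipschitz domain $D$ with the non-tangential maximal function of $\nabla u$ in $L^{2}(\partial D)$, integrating the divergence-free field $\vec h\,|\nabla u|^{2}-2(\vec h\cdot\nabla u)\nabla u$ over $D$ for a constant vector $\vec h$ gives
\[
\int_{\partial D}(\vec h\cdot\nu)\,|\partial_{\nu}u|^{2}\,dS=\int_{\partial D}(\vec h\cdot\nu)\,|\nabla_{T}u|^{2}\,dS-2\int_{\partial D}(\vec h\cdot\nabla_{T}u)\,\partial_{\nu}u\,dS .
\]
Since $\Sigma$ is a Lipschitz graph approaching $\{x_{2}=0\}$, one can choose $\vec h$ to be a fixed direction transverse to $\Sigma$ so that $\vec h\cdot\nu$ has a fixed sign and $|\vec h\cdot\nu|\ge c_{0}>0$ uniformly on $\Sigma$, with $c_{0}$ controlled by the Lipschitz constant; splitting $\nabla u$ into tangential and normal parts and using Cauchy--Schwarz then yields the comparability
\[
\|\partial_{\nu}u\|_{L^{2}(\Sigma)}\ \approx\ \|\nabla_{T}u\|_{L^{2}(\Sigma)}
\]
with constants depending only on the Lipschitz character. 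Applying this to $u=\mathcal{S}f$ in $\Omega$ and in $\Omega^{c}$ separately, and using that $\nabla_{T}\mathcal{S}f$ is the same from both sides, gives $\|(\tfrac12 I+\mathcal{K}^{*})f\|_{L^{2}(\Sigma)}\approx\|(\tfrac12 I-\mathcal{K}^{*})f\|_{L^{2}(\Sigma)}$; since adding $(\tfrac12 I+\mathcal{K}^{*})f$ and $(\tfrac12 I-\mathcal{K}^{*})f$ recovers $f$, the displayed lower bound follows for either sign.

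The lower bound shows $I\pm\mathcal{K}^{*}$ is bounded below, hence injective with closed range on $L^{2}(\Sigma,dS)$. Since a bounded operator and its adjoint have closed range simultaneously and $\ker(I\pm\mathcal{K}^{*})=\{0\}$ forces the range of $I\pm\mathcal{K}$ to be dense, $I\pm\mathcal{K}$ is onto; it is therefore invertible once we check it is injective. If $(I+\mathcal{K})f=0$, the double layer potential $\mathcal{D}f$ has vanishing trace from one side, hence vanishes in that domain by uniqueness for the Dirichlet problem; its normal derivative, being continuous across $\Sigma$, then vanishes on $\Sigma$, so $\mathcal{D}f$ solves a zero-data Neumann problem in the other (connected, unbounded) domain and is constant there, the constant being $0$ by the decay at infinity; hence $f=(\mathcal{D}f)^{+}-(\mathcal{D}f)^{-}=0$, and $I-\mathcal{K}$ is handled identically. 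The Dirichlet and Neumann uniqueness statements used here I would derive from the single layer potential and its Rellich estimate, together with the connectedness of $\Omega,\Omega^{c}$ and the decay of the potentials at infinity. Symmetrically, $I\pm\mathcal{K}^{*}$ has closed range, has dense range because now $\ker(I\pm\mathcal{K})=\{0\}$, and is injective, hence is invertible as well.

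The crux is the Rellich identity: it is what upgrades the purely real-variable $L^{2}$-boundedness of the Cauchy integral into a quantitative lower bound, and it is the only place the Lipschitz constant enters. The one genuinely extra difficulty compared with the bounded-domain case is justifying the integration by parts on the unbounded domains $\Omega$ and $\Omega^{c}$: I must show the boundary contributions at infinity vanish, and this is exactly where the hypothesis that $\Sigma$ approaches $\{x_{2}=0\}$ is used---it provides a uniformly transverse fixed direction and the decay of $\nabla\mathcal{S}f$ along and away from $\Sigma$ needed to kill those terms. In two dimensions I would also work throughout with $\nabla\mathcal{S}f$, whose kernel decays like $|X-Y|^{-1}$, rather than with $\mathcal{S}f$ itself, so as to sidestep the non-decaying logarithmic fundamental solution. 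Once the Rellich estimate and the boundary-value-problem uniqueness are in place, the remaining steps are routine.
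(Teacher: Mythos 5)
The paper does not prove this statement at all: it is quoted verbatim as a known theorem of Verchota and Kenig (in the form stated in Wu's 1999 paper), so there is no in-paper argument to compare against. Your sketch is a faithful outline of the standard proof from that literature: $L^{2}$-boundedness via the Coifman--McIntosh--Meyer theorem on the Cauchy integral, the jump relations for the layer potentials, the Rellich identity with a fixed transverse direction (available precisely because $\Sigma$ is a Lipschitz graph approaching $\{x_{2}=0\}$) to get $\|(\tfrac12 I+\mathcal{K}^{*})f\|\approx\|(\tfrac12 I-\mathcal{K}^{*})f\|$ and hence the lower bound, and then functional analysis. The one place you genuinely deviate from Verchota's original argument is surjectivity: he closes the range by the method of continuity (deforming the Lipschitz graph to a hyperplane and tracking the uniform Rellich constant), whereas you pass to the adjoint and prove injectivity of $I\pm\mathcal{K}$ via uniqueness for the Dirichlet and Neumann problems. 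That route is also standard and correct, but be aware that uniqueness for these problems in the class $N(\nabla u)\in L^{2}$ on an unbounded Lipschitz domain is itself nontrivial and must not be derived circularly from the invertibility you are proving; likewise the continuity of $\partial_{\nu}\mathcal{D}f$ across $\Sigma$ holds only in a weak ($H^{-1}$) sense for general $f\in L^{2}$. These, together with the decay-at-infinity issues you already flag for the two-dimensional logarithmic potential, are the points a complete write-up would have to supply; as a proof outline of a cited classical theorem, your proposal is sound.
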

Recall the definition of the Hilbert transform on $\Sigma(t)$,
\begin{equation*}
\mathfrak{H}f(\alpha,t)=\frac 1{\pi i}\operatorname{p.v.}\int \frac {f(\beta,t)z_\beta(\beta,t)}{z(\alpha,t)-z(\beta,t)}d\beta.
\end{equation*}
It is easy to see that, for a scalar-valued function $f$,
 \begin{equation}\label{eq-a4}
 \begin{split}
\mathcal{K}f&=\operatorname{Re}\bigl(\mathfrak{H}\bigr)f, \\
\mathcal{K}^* f&=-\operatorname{Re} \left( e^{i\theta}\mathfrak{H}( e^{-i\theta}f)\right)=-\operatorname{Re} \left( e^{i\theta}[\mathfrak{H}, e^{-i\theta}]f+\mathfrak{H}f\right).
\end{split}
\end{equation}

We cite a lemma in \cite{Sijue:2009:2d-almost-global} to commute the derivative with $\mathfrak{H}$.
\begin{lemma}\label{le-2}
Assume that $f\in C^1(\R\times (0,T))$ satisfying $f_\alpha\to 0$ as $|\alpha|\to \infty$. Then
\begin{align}
[\partial_t,\mathfrak{H}]f &=[z_t,\mathfrak{H}]\frac {f_\alpha}{z_\alpha},\\
\partial_\alpha \mathfrak{H}f &=z_\alpha \mathfrak{H}\frac {f_\alpha}{z_\alpha}.
\end{align}
\end{lemma}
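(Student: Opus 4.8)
The plan is to prove both commutator identities directly from the definition of $\mathfrak{H}$, by differentiating the singular integral --- in $t$ for the first identity, in $\alpha$ for the second --- and tracking the boundary terms produced by the attendant integrations by parts in the variable $\beta$. As a preliminary I would record that $\mathfrak{H}1=1$: the constant $1$ is the boundary value on $\Sigma(t)$ of the function identically $1$ on $\Omega(t)$, so this is immediate from the characterization of $\mathfrak{H}$ as a boundary Cauchy projection (alternatively a one-line contour computation), and it rephrases as $\tfrac1{\pi i}\operatorname{p.v.}\int\frac{z_\beta(\beta,t)}{z(\alpha,t)-z(\beta,t)}\,d\beta=1$. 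Using this I would rewrite
\begin{equation*}
\mathfrak{H}f(\alpha,t)=f(\alpha,t)+\frac1{\pi i}\int\frac{\bigl(f(\beta,t)-f(\alpha,t)\bigr)\,z_\beta(\beta,t)}{z(\alpha,t)-z(\beta,t)}\,d\beta ,
\end{equation*}
an honestly convergent integral near $\beta=\alpha$ --- its integrand tends there to $-f_\alpha$, using that the non-self-intersection condition gives $|z(\alpha)-z(\beta)|\gtrsim|\alpha-\beta|$ on $[0,T]$ --- so that differentiation under the integral sign is legitimate.

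For the second identity I would differentiate this formula in $\alpha$. The derivative of the isolated term $f(\alpha)$ is exactly cancelled by the contribution in which $\partial_\alpha$ hits the $f(\alpha)$ inside the integrand, after again invoking $\mathfrak{H}1=1$; using $\partial_\alpha(z(\alpha)-z(\beta))^{-1}=\tfrac{z_\alpha(\alpha)}{z_\beta(\beta)}\partial_\beta(z(\alpha)-z(\beta))^{-1}$, what survives is $-\tfrac{z_\alpha(\alpha)}{\pi i}\operatorname{p.v.}\!\int\bigl(f(\beta)-f(\alpha)\bigr)\,\partial_\beta\!\bigl(\tfrac1{z(\alpha)-z(\beta)}\bigr)\,d\beta$. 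Integrating by parts in $\beta$, the boundary terms at $\beta=\alpha\pm\eps$ cancel in the limit $\eps\to0$ --- the vanishing of $f(\beta)-f(\alpha)$ at $\beta=\alpha$ kills the $\eps^{-1}$ blow-up --- and the boundary terms at $\beta\to\pm\infty$ vanish because $f$ grows sublinearly (forced by $f_\alpha\to0$) while $|z(\alpha)-z(\beta)|\gtrsim|\alpha-\beta|$; what remains is
\begin{equation*}
\partial_\alpha\mathfrak{H}f(\alpha,t)=\frac{z_\alpha(\alpha,t)}{\pi i}\operatorname{p.v.}\!\int\frac{f_\beta(\beta,t)}{z(\alpha,t)-z(\beta,t)}\,d\beta=z_\alpha\,\mathfrak{H}\!\Bigl(\tfrac{f_\alpha}{z_\alpha}\Bigr),
\end{equation*}
the last equality being just the definition of $\mathfrak{H}$ evaluated on $g=f_\beta/z_\beta$.

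For the first identity I would use that the truncation set $\{|\beta-\alpha|>\eps\}$ consists of Lagrangian labels and is therefore independent of $t$, so one may differentiate under the principal value directly. Separating off $\mathfrak{H}(f_t)$ one gets
\begin{equation*}
[\partial_t,\mathfrak{H}]f(\alpha)=\frac1{\pi i}\operatorname{p.v.}\!\int\Bigl(\frac{f(\beta)\,z_{\beta t}}{z(\alpha)-z(\beta)}-\frac{f(\beta)\,z_\beta\,(z_t(\alpha)-z_t(\beta))}{(z(\alpha)-z(\beta))^2}\Bigr)d\beta ,
\end{equation*}
where the two singular pieces have cancelling $O((\beta-\alpha)^{-1})$ behaviour at $\beta=\alpha$, while directly from the definition $[z_t,\mathfrak{H}](f_\alpha/z_\alpha)(\alpha)=\tfrac1{\pi i}\operatorname{p.v.}\int\frac{(z_t(\alpha)-z_t(\beta))\,f_\beta(\beta)}{z(\alpha)-z(\beta)}\,d\beta$. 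Writing $g(\beta):=z_t(\beta)-z_t(\alpha)$, so $g_\beta=z_{\beta t}$ and $g(\alpha)=0$, the integrand of the difference $[\partial_t,\mathfrak{H}]f-[z_t,\mathfrak{H}](f_\alpha/z_\alpha)$ collapses to
\begin{equation*}
\frac{(fg)_\beta}{z(\alpha)-z(\beta)}+\frac{fg\,z_\beta}{(z(\alpha)-z(\beta))^2}=\partial_\beta\!\Bigl(\frac{f(\beta)\,(z_t(\beta)-z_t(\alpha))}{z(\alpha)-z(\beta)}\Bigr),
\end{equation*}
whose principal-value integral over $\R$ is the limit of boundary terms: at $\beta=\alpha\pm\eps$ the bracketed quantity tends to the finite value $-f(\alpha)z_{\alpha t}(\alpha)/z_\alpha(\alpha)$ from both sides, so the two contributions cancel; at $\beta\to\pm\infty$ it vanishes since $z_t$ is bounded on the interface (periodicity, or the ambient decay hypotheses) while $|z(\alpha)-z(\beta)|\gtrsim|\alpha-\beta|$ and $f$ grows sublinearly. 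Hence the difference is $0$, which is the claimed identity.

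The substance, as opposed to the algebra, lies in the analytic bookkeeping: legitimizing differentiation under the (principal-value) integral and keeping track of the cancellation of the divergent $\eps^{-1}$ boundary terms in the $\partial_\alpha$ computation, and the vanishing of the boundary terms at spatial infinity, which is precisely where the hypothesis $f_\alpha\to0$ (sublinear growth of $f$), together with the standing assumptions on the interface --- $|z(\alpha)-z(\beta)|\gtrsim|\alpha-\beta|$ on $[0,T]$ and boundedness of $z_t$ --- enter. I expect the main obstacle to be this last point, together with a small amount of care (a little extra smoothness, or an approximation argument) to make differentiation under the integral fully rigorous.
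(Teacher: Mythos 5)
The paper does not prove this lemma at all --- it is quoted verbatim from Wu \cite{Sijue:2009:2d-almost-global} --- so there is no in-paper argument to compare against. Your proof is the standard one from that literature (differentiate the kernel, convert $\partial_\alpha$ of the kernel into $\partial_\beta$, integrate by parts, check that the boundary terms at $\beta=\alpha\pm\eps$ cancel and those at infinity vanish), and the algebra is correct: in particular the identity $\frac{(fg)_\beta}{z(\alpha)-z(\beta)}+\frac{fg\,z_\beta}{(z(\alpha)-z(\beta))^2}=\partial_\beta\bigl(\frac{fg}{z(\alpha)-z(\beta)}\bigr)$ with $g(\beta)=z_t(\beta)-z_t(\alpha)$ is exactly the right way to see the first identity.

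Two corrections. First, your preliminary claim $\mathfrak{H}1=1$ is false in this setting: for an unbounded interface approaching the real line (flat case $z(\alpha)=\alpha$, where $\mathfrak{H}=-i\mathcal{H}$), one has $\mathfrak{H}1=0$, not $1$; the constant $1$ is holomorphic in $\Omega(t)$ but does not decay at infinity, so it is not in the $+1$ eigenspace. (The paper itself retains the term $(I-\mathfrak{H})\mathbf{1}$ in \eqref{eq-a7}, which would be identically zero under your convention.) Fortunately this does not damage your argument: whatever constant $c=\mathfrak{H}1$ is, the isolated term $cf(\alpha)$ in your subtracted representation and the term where $\partial_\alpha$ hits $f(\alpha)$ inside the integral both carry the factor $c$ and cancel, so the surviving expression --- and hence the conclusion --- is unchanged. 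You should still state the correct value. Second, and more minor: your subtracted integrand $\frac{(f(\beta)-f(\alpha))z_\beta}{z(\alpha)-z(\beta)}$ is bounded near $\beta=\alpha$ but is still only conditionally convergent at $\beta=\pm\infty$ (it decays like $\frac{f(\beta)-f(\alpha)}{\beta}$, which need not be absolutely integrable even for bounded $f$), so "differentiation under the integral sign is legitimate" still requires the truncated-integral/uniform-convergence argument you invoke for the $t$-derivative; also note the sign typo in your displayed relation $\partial_\alpha(z(\alpha)-z(\beta))^{-1}=-\frac{z_\alpha(\alpha)}{z_\beta(\beta)}\partial_\beta(z(\alpha)-z(\beta))^{-1}$ (the minus sign is missing there, though your subsequent formula has the correct sign).
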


In Section \ref{sec:pass2arc-length+energy}, we will consider periodic functions. For periodic functions, the kernels in the Hilbert transform $\mathcal{H}$ and the Cauchy transform $\mathfrak{H}$ will take different forms from the real line case;
\begin{equation}\label{eq-39}
\mathcal{H}f(\alpha)=\frac{\operatorname{p.v.}}{\pi} \int_{\mathbb{R}}\frac {f(\beta)}{\alpha-\beta} d\beta =\frac {1}{2\pi}\int_0^{2\pi} f(\beta) \cot \frac {\alpha-\beta}{2} d\beta.
\end{equation}

\begin{remark}\label{re-5}
For periodic functions, when $\Omega$ is a space-time slab,  Theorem \ref{thm-Verchota} still holds, for instance, see \cite[Lemma 6.1]{Ambrose:2003:LWP-vortex-sheet-with-surface-tension} and the references therein.
\end{remark}

\section{Reduction to a quasilinear system}\label{sec:new-reduction}
We fix an integer $r\ge 4$. Let $\kappa$ be the change of coordinates from Lagrangian $(\alpha,t)$ to arc-length $(s,t)$. We assume that  $\ka$ is a diffeomorphism.

We also fix the notions $z$ and $\xi$: points on the interface of the water surface in the Lagrangian and arc-length coordinates. We have the relations between $z$ and $\xi$:  $$z(\alpha,t)=\xi(s,t)=\xi\circ \ka (\alpha,t).$$ Then if differentiating both sides in $\alpha$ and $t$, we have
\begin{align}
\label{eq-a1} z_\alpha(\alpha,t)&=\bigl(\xi_s \kappa_\alpha\bigr)\circ \ka ,\\
\label{eq-a2} z_t(\alpha,t)&=\bigl(\kappa_t\xi_s+\xi_t\bigr)\circ \ka,
\end{align}
where $\kappa_\alpha$ is a scalar.  Then \eqref{eq-a1} yields that, in the Lagrangian coordinate,
\begin{equation}\label{eq-1}
|z_\alpha|=|\kappa_\alpha|, \text{ as } |\xi_s|=1.
\end{equation}
Since we are assuming that the interface is nonself-intersecting, i.e., $|(z(\alpha)-z(\beta))/(\alpha-\beta)|>c$ for some $c>0$ and for all $\alpha\neq \beta$. We see that $\kappa_\alpha\neq 0$. 
From \eqref{eq-a2},
\begin{equation}\label{eq-b1}
\bigl(\kappa_t\xi_s\bigr)\circ \ka =z_t(\alpha,t)-\xi_t\circ \ka ,\,\Rightarrow \kappa_t\circ \kappa^{-1}= \bigl(z_t\circ\kappa^{-1}-\xi_t\bigr)\cdot \xi_s.
\end{equation}
Here $z_1\cdot z_2$ denotes the inner product of $z_1$ and $z_2$ when regarding $z_1$, $z_2$ as vectors in $\mathbb{R}^2$. It is also that $z_1\cdot z_2 =\operatorname{Re}(\bar{z}_1z_2)$.

We set $\delta(s,t):=\ka_t \circ \ka^{-1}(s,t)$ and $u:=\delta_s$; then $\delta$ is the difference of the two tangential velocities in the Lagrangian and arc-length coordinates. In the next two subsections, we will derive a quasilinear system on $(\theta, u)$ from \eqref{eq:complex-coordinate} by differentiations and taking the real or the imaginary parts.

\subsection{An equation on $\theta_t$.} We take the time derivative to $\ln z_\alpha=\ln |z_\alpha|+i\theta$ to obtain
\begin{equation}\label{eq-3}
\frac {z_{t\alpha}}{z_\alpha} =\frac {\ka_{t\alpha}}{\ka_\alpha}+i\theta_t =(\ka_t \circ \ka^{-1})_s\circ \ka +i\theta_t=\delta_s\circ \ka+i\theta_t.
\end{equation}
After taking the complex conjugate and applying the Cauchy transform to both sides of \eqref{eq-3},
\begin{equation}\label{eq-4}
\mathfrak{H}\frac {\bar{z}_{t\alpha}}{\bar{z}_\alpha} =\mathfrak{H}(\delta_s\circ \ka)-i\mathfrak{H}(\theta_t).
\end{equation}
We write the left hand side of \eqref{eq-4}:
\begin{equation}\label{eq-5}
\begin{split}
&=\mathfrak{H}\left(\frac {\bar{z}_{t\alpha}}{z_\alpha}\frac {z_\alpha}{\bar{z}_\alpha}\right)=\frac {z_\alpha}{\bar{z}_\alpha} \mathfrak{H}\frac {\bar{z}_{t\alpha}}{z_\alpha} +[\mathfrak{H},\frac {z_\alpha}{\bar{z}_\alpha} ]\frac {\bar{z}_{t\alpha}}{z_\alpha}\\
&=\frac {\bar{z}_{t\alpha}}{\bar{z}_\alpha}+[\mathfrak{H},\frac {z_\alpha}{\bar{z}_\alpha} ]\frac {\bar{z}_{t\alpha}}{z_\alpha},
\end{split}
\end{equation} where we have used the fact, by using the second identity in Lemma \ref{le-2}
 $$\mathfrak{H}\frac {\bar{z}_{t\alpha}}{z_{\alpha}}=\mathfrak{H}\frac {\partial_\alpha\bar{z}_{t}}{z_{\alpha}}=\frac {1}{z_\alpha} \partial_\alpha \mathfrak{H}\bar {z}_t=\frac {1}{z_\alpha} \partial_\alpha \bar {z}_t=\frac {\bar{z}_{t\alpha}}{z_{\alpha}}. $$  Note that here we have used the second equation in \eqref{eq:complex-coordinate}.

Hence we see that
\begin{equation}\label{eq-6}
[\mathfrak{H},\frac {z_\alpha}{\bar{z}_\alpha} ]\frac {\bar{z}_{t\alpha}}{z_\alpha} =\mathfrak{H}(\delta_s\circ \ka)-i\mathfrak{H}(\theta_t)-\delta_s\circ \kappa+i\theta_t.
\end{equation}
Taking the imaginary part,
\begin{equation}\label{eq-7}
\begin{split}
\theta_t &=-\operatorname{Im}\mathfrak{H}(\delta_s\circ \ka)+\operatorname{Re}\mathfrak{H}(\theta_t)+\operatorname{Im}[\mathfrak{H},\frac{z_\alpha}{\bar{z}_\alpha}]\frac {\bar{z}_{t\alpha}}{z_\alpha}\\
&=-\frac {\mathfrak{H}-\overline{\mathfrak{H}}}{2i}\bigl(\delta_s\circ \ka \bigr)+\operatorname{Re}\mathfrak{H}(\theta_t)+\operatorname{Im}[\mathfrak{H},\frac{z_\alpha}{\bar{z}_\alpha}]\frac {\bar{z}_{t\alpha}}{z_\alpha}\\
&=i\mathfrak{H}\bigl(\delta_s\circ \ka \bigr)-i\frac {\mathfrak{H}+\overline{\mathfrak{H}}}{2}\bigl(\delta_s\circ \ka \bigr)+\operatorname{Re}\mathfrak{H}(\theta_t)+\operatorname{Im}[\mathfrak{H},\frac{z_\alpha}{\bar{z}_\alpha}]\frac {\bar{z}_{t\alpha}}{z_\alpha}\\
&=i\mathfrak{H}\bigl(\delta_s\circ \ka \bigr)-i\operatorname{Re}\mathfrak{H}\bigl(\delta_s\circ \ka \bigr)+\operatorname{Re}\mathfrak{H}(\theta_t)+\operatorname{Im}[\mathfrak{H},\frac{z_\alpha}{\bar{z}_\alpha}]\frac {\bar{z}_{t\alpha}}{z_\alpha}\\
&=i\mathfrak{H}\bigl(\delta_s\circ \ka \bigr) +\phi(\theta_s,\delta\circ \ka)\\
&=i\mathfrak{H}\bigl(u\circ \ka \bigr) +\phi(\theta,u\circ \ka),
\end{split}
\end{equation}where \begin{equation}\label{eq-8}
\phi(\theta,u\circ \ka)
 =-i\operatorname{Re}\mathfrak{H}\bigl(u\circ \ka \bigr)+\operatorname{Re}\mathfrak{H}(\theta_t)+\operatorname{Im}[\mathfrak{H},\frac{z_\alpha}{\bar{z}_\alpha}]\frac {\bar{z}_{t\alpha}}{z_\alpha}.
\end{equation}

\subsection{An equation on $u_t$.} We will derive an equation on $u_t$ from the Euler equation. We begin with \eqref{eq:complex-coordinate} and decompose $\nabla P$ along the tangential and normal directions:
\begin{equation}\label{eq-9}
\begin{split}
z_{tt}+i=-\nabla P& =\langle -\nabla P,  \frac {z_\alpha}{|z_\alpha|} \rangle \frac {z_\alpha}{|z_\alpha|}+\langle -\nabla P,  \frac {iz_\alpha}{|z_\alpha|} \rangle \frac {iz_\alpha}{|z_\alpha|}\\
&=-\frac {\partial_\alpha}{|z_\alpha|}P e^{i\theta} +i(-\frac {\partial P}{\partial n}) e^{i\theta}\\
&=-\frac {\partial_\alpha}{|z_\alpha|}P e^{i\theta} +ia e^{i\theta},
\end{split}
\end{equation} where
\begin{equation}\label{eq-10}
a:=-\frac {\partial P}{\partial n} \text{ denotes the Taylor sign.}
\end{equation}
Applying $\frac{\partial_\alpha}{|z_\alpha|}$ to both sides of \eqref{eq-9},
\begin{equation}\label{eq-11}
\begin{split}
\frac{\partial_\alpha}{|z_\alpha|}z_{tt}&=\frac {z_{tt\alpha}}{|z_\alpha|}=[\frac {1}{|z_\alpha|},\partial_t^2]z_\alpha +\partial_t^2\bigl(\frac {z_\alpha}{|z_\alpha|}\bigr)\\
&=-\frac{\partial_\alpha}{|z_\alpha|}\bigl( \frac{\partial_\alpha P}{|z_\alpha|}\bigr)e^{i\theta}+i\frac{\partial_\alpha a}{|z_\alpha|}e^{i\theta}-\frac{\partial_\alpha P}{|z_\alpha|}e^{i\theta} \frac {i\partial_\alpha \theta}{|z_\alpha|}+iae^{i\theta} \frac {i\partial_\alpha \theta}{|z_\alpha|}.
\end{split}
\end{equation}Recalling that $P=-\frac {\partial_\alpha \theta}{|z_\alpha|}$, and multiplying both sides of \eqref{eq-11} by $e^{-i\theta}$ and then taking real parts, we have
\begin{equation}\label{eq-12}
\operatorname{Re}\left(e^{-i\theta}[\frac {1}{|z_\alpha|},\partial^2_t]z_\alpha +e^{-i\theta}\partial_t^2\bigl(\frac {z_\alpha}{|z_\alpha|}\bigr)\right) =\bigl(\frac {\partial_\alpha}{|z_\alpha|}\bigr)^3\theta -a\frac {\partial_\alpha}{|z_\alpha|}\theta.
\end{equation}
By using the product rule $[A, B^j]f=\sum_{k=1}^j B^{j-k}[A,B]B^{k-1}f$, we see that
\begin{equation}\label{eq-13}
\begin{split}
[\frac {1}{|z_\alpha|},\partial_t^2]z_\alpha &=\partial_t\bigl([\frac {1}{|z_\alpha|},\partial_t]z_\alpha\bigr)+[\frac {1}{|z_\alpha|},\partial_t]z_{t\alpha}\\
&=\partial_t\bigl(-\partial_t(\frac {1}{|z_\alpha|})z_\alpha\bigr)-\partial_t\bigl(\frac {1}{|z_\alpha|}\bigr) z_{t\alpha}\\
&=\partial_t \bigl(\frac {1}{|z_\alpha|}\delta_s \circ \ka z_\alpha\bigr)+\frac {1}{|z_\alpha|} (\delta_s\circ \ka)z_{t\alpha}\\
&=\partial_t \bigl(\delta_s \circ \ka e^{i\theta}\bigr)+\frac {z_{t\alpha}}{|z_\alpha|} (\delta_s\circ \ka)\\
&=\partial_t(\delta_s \circ \ka)e^{i\theta}+\delta_s\circ \ka i\theta_te^{i\theta}+\frac {z_{t\alpha}}{|z_\alpha|}(\delta_s\circ \ka),
\end{split}
\end{equation} where we have used that \begin{equation*}
\begin{split}
& [\frac {1}{|z_\alpha|}, \partial_t]z_\alpha =\frac {z_{t\alpha}}{|z_\alpha|} -\partial_t(\frac {1}{|z_\alpha|}z_\alpha)=-\partial_t\bigl(\frac {1}{|z_\alpha|}\bigr)z_\alpha,\\
&\partial_t(1/|z_\alpha|)=\partial_t(1/\ka_\alpha)=-\ka_{t\alpha}/\ka_\alpha^2=-\delta_s\circ \ka/\ka_\alpha.
\end{split}
\end{equation*} On the other hand,
\begin{equation}\label{eq-14}
\partial_t^2\bigl(\frac {z_\alpha}{|z_\alpha|}\bigr)=\partial_t^2(e^{i\theta}) =(-\theta_t^2+i\theta_{tt})e^{i\theta}.
\end{equation}
Then taking the real parts of \eqref{eq-12}, and using \eqref{eq-13} and \eqref{eq-14}, we obtain
\begin{equation}\label{eq-15}
\partial_t(\delta_s \circ \ka) +\operatorname{Re} \bigl( \frac {e^{-i\theta}z_{t\alpha}}{|z_\alpha|}(\delta_s\circ \ka)\bigr)-\theta_t^2=\bigl(\frac {\partial_\alpha}{|z_\alpha|}\bigr)^3\theta -a\frac {\partial_\alpha}{|z_\alpha|}\theta.
\end{equation}
Thus we conclude that
\begin{equation}\label{eq-16}
\partial_t(\delta_s \circ \ka)=\bigl(\frac {\partial_\alpha}{|z_\alpha|}\bigr)^3\theta -a\frac {\partial_\alpha}{|z_\alpha|}\theta +\psi(\theta,\delta_s\circ \ka),
\end{equation} i.e.,
\begin{equation}\label{eq-a3}
\partial_t(u\circ \ka)=\bigl(\frac {\partial_\alpha}{|z_\alpha|}\bigr)^3\theta -a\frac {\partial_\alpha}{|z_\alpha|}\theta +\psi(\theta,u\circ \ka),
\end{equation}
where
\begin{equation}\label{eq-17}
\psi(\theta,u\circ \ka):=-\operatorname{Re} \bigl( \frac {e^{-i\theta}z_{t\alpha}}{|z_\alpha|}(u\circ \ka)\bigr)+\theta_t^2.
\end{equation}

To conclude this section, we have derived a system of equations on $(u, \theta)$, where $u=\delta_s$ and $\delta:=\ka_t\circ \ka^{-1}$, $\ka$ is the change of coordinates from Lagrangian to arc-length; and $\theta$ is the tangent angle that the interface makes with the positive $x$-axis. In the Lagrangian coordinate,
\begin{equation}\label{eq-lagrangian}
\begin{cases}
\theta_t &=i\mathfrak{H}(u\circ \ka) +\phi,\\
(u\circ \ka)_t&=\bigl(\frac {\partial_\alpha}{|z_\alpha|}\bigr)^3\theta -a\frac {\partial_\alpha}{|z_\alpha|}\theta +\psi
\end{cases}
\end{equation}where $\phi,\,\psi$ are defined in \eqref{eq-8} and \eqref{eq-17}, respectively. This completes the derivation of the system \eqref{eq-coordinate-invariant-form}, which therefore completes the proof of Proposition \ref{thm-coordinate-changes}.

\section{Equations on the Taylor sign}\label{sec:taylor-sign}
As observed in \cite{Ambrose-Masmoudi:2005:zero-surface-tension-2d-water-wave}, in order to establish \emph{a priori} energy inequality for the quasilinear system \eqref{eq-lagrangian}, the Taylor sign has to be taken into account: the explicit form of $a_s$ is used to cancel some high order term contributions in the energy inequality. Here we adopt a similar procedure to derive an explicit form of $a_s$ as in the previous subsection on $u_t$.

Recall that
\begin{equation}\label{eq-19}
z_{tt}+i =-\nabla P=-\frac {\partial_\alpha P}{|z_\alpha|} e^{i\theta}+iae^{i\theta}=-(\frac {\partial_\alpha }{|z_\alpha|})^2\theta e^{i\theta} +ia e^{i\theta}.
\end{equation}

We apply $\frac {\partial_\alpha}{|z_\alpha|}$ to both sides to obtain
\begin{equation}\label{eq-21}
\frac {\partial_\alpha }{|z_\alpha|} z_{tt}=-(\frac {\partial_\alpha }{|z_\alpha|})^2P e^{i\theta}+i\frac {\partial_\alpha a}{|z_\alpha|}e^{i\theta}-\frac {\partial_\alpha P}{|z_\alpha|} e^{i\theta} \frac {i\partial_\alpha\theta }{|z_\alpha|}+iae^{i\theta}\frac {i\partial_\alpha\theta }{|z_\alpha|}.
\end{equation}
Multiplying both sides by $e^{-i\theta}$ and then taking the imaginary part, we see that
\begin{equation}
\frac {\partial_\alpha a}{|z_\alpha|}=(\frac {\partial_\alpha }{|z_\alpha|})^2\theta \frac {\partial_\alpha\theta }{|z_\alpha|}+\operatorname{Im}\bigl( \frac {\partial_\alpha z_{tt}}{|z_\alpha|}e^{-i\theta}\bigr).
\end{equation} By a similar commutator analysis as in the previous subsection, we
obtain that
\begin{equation}\label{eq-22}
\operatorname{Im}\bigl( \frac {\partial_\alpha z_{tt}}{|z_\alpha|}e^{-i\theta}\bigr)=\theta_{tt}+\delta_s\circ \ka \theta_t+ \operatorname{Im}\bigl( \frac {\partial_\alpha z_{t}}{|z_\alpha|}e^{-i\theta}\bigr)\delta_s\circ \ka.
\end{equation}
Recall that $\theta_t=i\mathfrak{H}(\delta_s\circ \ka)+\phi(\theta,\delta_s\circ \ka)$, we see that
\begin{equation}\label{eq-23}
\begin{split}
\frac {\partial_\alpha a}{|z_\alpha|}&=i \mathfrak{H}\partial_t(\delta_s\circ \ka) +\partial_t \phi+ i[\partial_t,\mathfrak{H}](\delta_s\circ \ka)+\delta_s\circ \ka \theta_t+ \operatorname{Im}\bigl( \frac {\partial_\alpha z_{t}}{|z_\alpha|}e^{-i\theta}\bigr)\delta_s\circ \ka+\bigl(\frac {\partial_\alpha}{|z_\alpha|} \bigr)^2\theta \frac {\partial_\alpha}{|z_\alpha|}\theta\\
&=i \mathfrak{H}\partial_t(\delta_s\circ \ka) +\partial_t \phi+ i[z_t,\mathfrak{H}]\frac {(\delta_s\circ \ka)_\alpha}{z_\alpha}+\delta_s\circ \ka \theta_t+ \operatorname{Im}\bigl( \frac {\partial_\alpha z_{t}}{|z_\alpha|}e^{-i\theta}\bigr)\delta_s\circ \ka+\bigl(\frac {\partial_\alpha}{|z_\alpha|} \bigr)^2\theta \frac {\partial_\alpha}{|z_\alpha|}\theta\\
&=i \mathfrak{H}\partial_t(u\circ \ka) +\omega(\theta,u\circ \ka) ,
\end{split}
\end{equation} where we have used Lemma \ref{le-2}.
\begin{equation}\label{eq-24}
\begin{split}
\omega:&=\partial_t \phi+ i[z_t,\mathfrak{H}]\frac {(\delta_s\circ \ka)_\alpha}{z_\alpha}+\delta_s\circ \ka \theta_t+ \operatorname{Im}\bigl( \frac {\partial_\alpha z_{t}}{|z_\alpha|}e^{-i\theta}\bigr)\delta_s\circ \ka+\bigl(\frac {\partial_\alpha}{|z_\alpha|} \bigr)^2\theta \frac {\partial_\alpha}{|z_\alpha|}\theta\\
&=\partial_t \phi+ i[z_t,\mathfrak{H}]\frac {(u\circ \ka)_\alpha}{z_\alpha}+u\circ \ka \theta_t+ \operatorname{Im}\bigl( \frac {\partial_\alpha z_{t}}{|z_\alpha|}e^{-i\theta}\bigr)u\circ \ka+\bigl(\frac {\partial_\alpha}{|z_\alpha|} \bigr)^2\theta \frac {\partial_\alpha}{|z_\alpha|}\theta.
\end{split}
\end{equation}

\subsection{Expressions of $a$.} In \cite{Sijue:2009:2d-almost-global}, by using the adjoint double layered potential operator, Wu determined $a-1$ explicitly as a function of the position $z=z(\cdot, t)$ and the velocity $z_t(\cdot, t)$ of the interface. This has the advantage that one can estimate $a-1$ by terms in Energy. Furthermore, by taking
the time derivative, and commuting $\partial_t$ with the double layered potential operator up to commutators, one can also estimate $a_t$ by Energy. In this section, in the presence of surface tension, we derive similar estimates for $a-1$; then by using Lemma \ref{le-2}, we compute $a_t$.

We recall the equation $z_{tt}+i=-\nabla P=iae^{i\theta}-\frac {\partial_\alpha P}{|z_\alpha|}e^{i\theta}$.
$$ -i a e^{-i\theta}=\bar z_{tt}-i +\frac {\partial_\alpha P}{|z_\alpha|} e^{-i\theta}.$$
We apply $I-\mathfrak{H}$ to both sides,
$$ -i(I-\mathfrak{H})\left(a e^{-i\theta}\right)=(I-\mathfrak{H})\left(\bar z_{tt}-i+\frac {\partial_\alpha P}{|z_\alpha|} e^{-i\theta}\right).$$
If multiplying both sides by $n=ie^{i\theta}$ and then taking the real parts,
\begin{equation}\label{eq-a5}
(I+\mathcal{K}^*)a =\operatorname{Re}\left\{ie^{i\theta}\left( \bigl(I-\mathfrak{H}\bigr)\bigl(\bar{z}_{tt}-i+\frac {\partial_\alpha P}{|z_\alpha|} e^{-i\theta}\right)\right\}.
\end{equation}
In other words, we have
\begin{equation}\label{eq-a6}
a =(I+\mathcal{K}^*)^{-1}\operatorname{Re}\left\{ie^{i\theta}\left( \bigl(I-\mathfrak{H}\bigr)\bigl(\bar{z}_{tt}-i+\frac {\partial_\alpha P}{|z_\alpha|} e^{-i\theta}\right)\right\}.
\end{equation}
We can simplify the right hand side of \eqref{eq-a6} further by observing $(I-\mathfrak{H})\bar{z}_{tt}=[z_t,\mathfrak{H}]\frac {\bar{z}_{t\alpha}}{z_\alpha}$:
\begin{equation}\label{eq-a7}
a =(I+\mathcal{K}^*)^{-1}\operatorname{Re}\left\{ie^{i\theta}\left( [z_t,\mathfrak{H}]\frac {\bar{z}_{t\alpha}}{z_\alpha}+e^{i\theta}(I-\mathfrak{H}){\bf 1}+(I-\mathfrak{H})\bigl(\frac {\partial_\alpha P}{|z_\alpha|} e^{-i\theta}\bigr)\right)\right\}.
\end{equation} Here ${\bf 1}$ denotes the constant function. 
We also need a bound on $a_t$ in terms of Energy. From the expression in \eqref{eq-a5}, if taking time derivative, we see that
\begin{equation}\label{eq-a8}
(I+\mathcal{K}^*)a_t=-[\partial_t,K^*]a+\partial_t\left\{\text{ RHS of \eqref{eq-a5} }\right\}.
\end{equation}
Then the expression for $a_t$ follows from inverting $I+\mathcal{K}^*$.

\section{Some preliminary estimates}\label{sec:pass2arc-length+energy}
In this section, we prepare some preliminary estimates for the \emph{a priori} energy inequality, which will be defined in this section.

We define the length of this curve by \begin{equation}\label{eq-length}
L(t):=\int_0^{2\pi} |z_\alpha| d\alpha.
\end{equation}
Then under the change of coordinate $\ka$, the length is still $L(t)$ because it is invariant under coordinate changes. This fact can also be explained by computing,
$$ \int_{0}^{2\pi} |z_\alpha| d\alpha= \int_{0}^{2\pi} \ka_\alpha(t) d\alpha =\int_{0}^{L(t)}\ka_\alpha\circ \ka^{-1} \frac {ds}{\ka_\alpha\circ \ka^{-1}}=\int_0^{L(t)} ds =L(t),$$
since $d\alpha=ds/\ka_\alpha\circ \ka^{-1}$ and $|z_\alpha| \circ \ka^{-1}=\ka_\alpha\circ \ka^{-1} $.

We record the time derivative here,
\begin{equation}\label{eq-42}
\begin{split}
\frac {d L(t)}{dt}& =\int_0^{2\pi} \frac {d}{dt} \ka_\alpha d\alpha=\int_0^{L(t)} \ka_{\alpha t} \circ \ka^{-1} \frac {ds}{\ka_\alpha\circ \ka^{-1}} \\
&=\int_0^{L(t)} \delta_s \ka_{\alpha} \circ \ka^{-1} \frac {ds}{\ka_\alpha\circ \ka^{-1}}\\
&=\int_0^{L(t)} \delta_s ds,
\end{split}
\end{equation}
where we have used the fact $$\partial_s \delta=\partial_s \bigl(\ka_t\circ \ka^{-1}\bigr)=\partial_s \bigl(\ka_t(\ka^{-1}(s,t),t)\bigr)=\ka_{\alpha t}\circ \ka^{-1} \partial_s \ka^{-1} =\ka_{\alpha t}\circ \ka^{-1} \frac {1}{\ka_\alpha\circ \ka^{-1}}. $$

\subsection{Passing from Lagrangian to arc-length} Let us first recall the system \eqref{eq-lagrangian} in the Lagrangian coordinate.
\begin{equation*}
\eqref{eq-lagrangian}': \qquad \begin{cases}
\theta_t &=i\mathfrak{H}(u\circ \ka) +\phi,\\
(u\circ \ka)_t&=\bigl(\frac {\partial_\alpha}{|z_\alpha|}\bigr)^3\theta -a\frac {\partial_\alpha}{|z_\alpha|}\theta +\psi
\end{cases}
\end{equation*} where $\phi,\,\psi$ are defined in \eqref{eq-8} and \eqref{eq-17}, respectively:
$$\eqref{eq-8}', \, \eqref{eq-17}': \,
\begin{cases}
\phi&=-i\operatorname{Re}\mathfrak{H}\bigl(u\circ \ka \bigr)+\operatorname{Re}\mathfrak{H}(\theta_t)+\operatorname{Im}[\mathfrak{H},\frac{z_\alpha}{\bar{z}_\alpha}]\frac {\bar{z}_{t\alpha}}{z_\alpha},\\
\psi&=-\operatorname{Re}\bigl(  \frac {e^{-i\theta}z_{t\alpha}}{|z_\alpha|}(u\circ \kappa) \bigr)+\theta_t^2.
\end{cases}$$
We also recall the expression of $\frac {\partial_\alpha a} {|z_\alpha|}$:
\begin{equation*}
\eqref{eq-23}':\quad \frac {\partial_\alpha a} {|z_\alpha|} =i\mathfrak{H}\partial_t (u\circ \ka)+ \omega(\theta,u\circ \ka),
\end{equation*}
where $\omega$ is the error term,
\begin{equation*}
\eqref{eq-24}': \quad  \omega=\partial_t \phi+ i[z_t,\mathfrak{H}]\frac {(u\circ \ka)_\alpha}{z_\alpha}+u\circ \ka \theta_t+ \operatorname{Im}\bigl( \frac {\partial_\alpha z_{t}}{|z_\alpha|}e^{-i\theta}\bigr)u\circ \ka+\bigl(\frac {\partial_\alpha}{|z_\alpha|} \bigr)^2\theta \frac {\partial_\alpha}{|z_\alpha|}\theta.
\end{equation*}

We linearize $i\mathfrak{H}$. Recall that
\begin{equation}\label{eq-25}
i\mathfrak{H}(\alpha,t)=\frac {1}{\pi} \operatorname{p.v.} \int f(\beta,t)\frac {z_\beta(\beta,t)}{z(\alpha,t)-z(\beta,t)} d\beta.
\end{equation}
This takes the form under the change of variables,
\begin{equation}\label{eq-43}
i\mathfrak{H}(s,t)=\frac {1}{\pi} \operatorname{p.v.} \int f(\beta,t)\frac {\xi_\beta(\beta,t)}{\xi(s,t)-\xi(\beta,t)} d\beta.
\end{equation}
Since we are considering the periodic waves, $\xi(\beta+L)= \xi(\beta)+L$, 
\begin{align*}
& \frac { \operatorname{p.v.}}{\pi} \int f(\beta,t)\frac {\xi_\beta(\beta,t)}{\xi(s,t)-\xi(\beta,t)} d\beta\\
&= \frac {\operatorname{p.v.}}{L} \int_0^L f(\beta) \xi_\beta\cot \frac {\pi(\xi(s)-\xi(\beta))}{L}d\beta.
\end{align*}
Writing
$$\cot \frac {\pi(\xi(s)-\xi(\beta))}{L} = \frac {1}{\xi_\beta} \cot\frac {\pi(s-\beta)}{L} +\left(\cot \frac {\pi(\xi(s)-\xi(\beta))}{L}- \frac {1}{\xi_\beta} \cot\frac {\pi(s-\beta)}{L}\right).$$
We see that 
\begin{equation}\label{eq-x1}
\begin{split}
i\mathfrak{H}(s,t) &= \frac {\operatorname{p.v.}}{L} \int_0^L f(\beta) \cot \frac {\pi(s-\beta)}{L} d\beta\\
&\quad + \frac { \operatorname{p.v.}}{L} \int_0^L f(\beta) \xi_\beta  \left(\cot \frac {\pi(\xi(s)-\xi(\beta))}{L}- \frac {1}{\xi_\beta} \cot\frac {\pi(s-\beta)}{L} \right)d\beta. 
\end{split}
\end{equation}
The first term is the Hilbert transform $\H(f)$; the second term is an error term denoted by $R(f)$.

Now we perform the change from the Lagrangian coordinate to the arc-length coordinate, and linearize $i\mathfrak{H}$ to the standard Hilbert transform $\H$ for the main terms in the system. Under the change of coordinates from Lagrangian to arc-length, the derivatives will change accordingly,
$$ \partial_t\mapsto \partial_t +\delta\partial_s, \, \frac {\partial_\alpha}{|z_\alpha|}\mapsto \partial_s.$$
So the system \eqref{eq-lagrangian} (or $\eqref{eq-lagrangian}'$ ) in the arc-length coordinate takes the following form:
\begin{equation}\label{eq-arclength}
\begin{cases}
\partial_t \theta &=\H(u) -\delta \partial_s \theta+ \tilde\phi (\theta, u),\\
\partial_t u&=\partial_s^3 \theta -a\partial_s \theta -\delta \partial_s u +\tilde\psi(\theta, u),
\end{cases}
\end{equation}
where $\tilde\phi$ and $\tilde\psi$ are error terms,
\begin{equation}\label{eq-36}
\begin{cases}
\tilde\phi(\theta,u)& =-i\operatorname{Re}\mathfrak{H}(u)+\operatorname{Re}\mathfrak{H}\bigl(\theta_t\circ \ka^{-1}\bigr)-\operatorname{Im}[\mathfrak{H},e^{2i\theta}]\frac {\partial_s(\bar{z}_t\circ \ka^{-1})}{\xi_s}+\mathcal{R}(u),\\
\tilde\psi(\theta,u)& =-u\operatorname{Re}\bigl(e^{-i\theta}\partial_s(z_t\circ \ka^{-1})\bigr)+\bigl(\theta_t\circ \ka^{-1}\bigr)^2.
\end{cases}
\end{equation}
Similarly,
\begin{equation}\label{eq-derivative-of-Taylor-sign}
a_s =\H(u_t)+ \tilde\omega(\theta,u),
\end{equation}
where $\tilde\omega$ is an error term,
\begin{equation}\label{eq-38}
\begin{split}
\tilde\omega &=\phi_t\circ \ka^{-1}+i[\bar{z}_t\circ \ka^{-1},\mathfrak{H}]\frac {\partial_su}{\xi_s}+u\theta_t\circ \ka^{-1}+\operatorname{Im}\bigl(e^{-i\theta}\partial_s(\bar{z}_t\circ \ka^{-1})\bigr)u\\
&\qquad +\partial_s^2\theta\partial_s\theta +\H(\delta \partial_s u) +\mathcal{R}(\partial_t u+\delta \partial_s u). 
\end{split}
\end{equation}

\subsection{Estimates on the error terms} We first observe that for a scalar-valued function $f$,
\begin{equation}\label{eq-a12}
\begin{split}
\operatorname{Re}\mathfrak{H}(f)&=\frac {\operatorname{p.v.}}{2\pi i} \int f(\beta,t) \left( \frac {\xi_\beta}{\xi(s,t)-\xi(\beta,t)}-\frac {\bar{\xi}_\beta}{\bar{\xi}(s,t)-\bar{\xi}(\beta,t)}\right) d\beta \\
&=\operatorname{Im}  \frac {1}{\pi } \int f(\beta,t) \frac {\xi_\beta}{\xi(s)-\xi(\beta)} d\beta \\
&= \operatorname{Im} \frac 1L \int_0^L f(\beta, t) \partial \xi(\beta) \cot \frac {\pi(\xi(s)-\xi(\beta))}{L}d\beta \\
&=\operatorname{Im} \frac 1L \int_0^L f(\beta, t) \left(  \partial \xi(\beta) \cot \frac {\pi(\xi(s)-\xi(\beta))}{L}-\cot \frac {\pi(s-\beta)}{L}\right) d\beta \\
&=\operatorname{Im}\mathcal{R}(f).
\end{split}
\end{equation}
So an estimate on $\operatorname{Re} \mathfrak{H} (f)$ reduces to estimating $\operatorname{Im}\mathcal{R}(f)$ for scalar functions.

Let $r\ge 4$ be an integer. We define the Energy $\E(t)$ for the system \eqref{eq-arclength} in the arc-length coordinate: for $(\theta, \delta, \gamma)\in H^{r+1}\times H^{r+1/2}\times H^r$,
where $\gamma$ is the vortex sheet density on $\Sigma(t)$, see Lemma \ref{le-biot-savart},
\begin{equation}\label{eq-energy}
\begin{split}
\E(t)&=\|\theta\|_2^2+\|\delta\|_{L^2}^2+\|\gamma\|_{L^2}^2+\|u\|_2^2+L^2(t)+\sum_{k=1}^{r-1}\langle \partial_s^{k}\gamma, \partial_s^{k}\gamma \rangle + \sum_{k=1}^r \bigl(\E_k^1+\E^2_k+\E^3_k\bigr),
\end{split}
\end{equation}
where $u=\delta_s$,  $\langle f,\,g \rangle=\int f\bar{g} ds $ and $D=\H\partial_s$,
\begin{equation}\label{eq-energy-subterms}
\begin{cases}
\E_k^1(t) & := \frac 12 \left(\langle \partial_s^{k+1} \theta, \partial_s^{k+1} \theta  \rangle +
\langle a \partial_s^{k} \theta, \partial_s^{k} \theta  \rangle + \langle \partial_s^{k-1} u, D\partial_s^{k-1} u \rangle\right),\\
\E_k^2(t)&:= \langle \partial_s^{k-1} u, (10 \|\theta_s(0)\|_{L^\infty_{s}} -\theta_s)\partial_s^{k-1} u \rangle,\\
\E^3_k(t)&:= 10 \|\theta_s(0)\|_\infty \langle \partial_s^{k} \theta, D\partial_s^k \theta  \rangle.
\end{cases}
\end{equation}

In what follows, we will see that the error terms in $\theta_t, u_t$ and $a_s$ are bounded by $\E(t)$. In view of \eqref{eq-36} and \eqref{eq-38}, we first study the following three operators:
\begin{align}
\label{eq-a35}
\mathcal{R}(f)&= \frac {\operatorname{p.v.}}{L} \int_0^L f(\beta) \xi_\beta \left(\cot \frac {\pi(\xi(s)-\xi(\beta))}{L}- \frac {1}{\xi_\beta} \cot\frac {\pi(s-\beta)}{L} \right)d\beta,\\
\label{eq-a36} [\mathfrak{H}, e^{2i\theta(s)}] \frac {f_s}{\xi_s} &=-\frac {\operatorname{p.v.}}{ iL } \int_0^L f_\beta(\beta)
\bigl( e^{2i\theta(s)}-e^{2i\theta(\beta)}\bigr)  \cot \frac {\pi (\xi(s)-\xi(\beta))}{L}d\beta,\\
\label{eq-a37} [z_t\circ\ka^{-1}, \mathfrak{H}] \frac {f_s}{\xi_s}&=\frac {\operatorname{p.v.}}{iL } \int_0^L \frac { f_\beta(\beta)}{\xi_\beta}
\bigl( z_t\circ\ka^{-1}(s)-z_t\circ\ka^{-1}(\beta)\bigr) \cot \frac {\pi(s-\beta)}{L}d\beta.
\end{align}
Recall that $C(\E)$ is a polynomial in $\E(t)$, which may change from line to line. We recall a lemma \cite[Lemma 3.9]{Ambrose:2003:LWP-vortex-sheet-with-surface-tension}.

\begin{lemma}\label{le-algebra}
Let $r>\frac 12$ and $r\ge r'\ge 0$. If $f\in H^r$ and $g\in H^{r'}$, then $fg\in H^{r'}$ with the estimate
$$ \|fg\|_{H^{r'}} \le C \|f\|_{H^r} \|g\|_{H^{r'}}.$$
\end{lemma}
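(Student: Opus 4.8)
The plan is to reduce the statement to the standard tame product estimate for Sobolev spaces on the torus via the Fourier characterization of $H^r$. Writing $f=\sum_k \hat f(k)e^{ik\cdot}$ and $g=\sum_k \hat g(k)e^{ik\cdot}$ on the circle of length $L$, the Fourier coefficients of the product are given by the convolution $\widehat{fg}(k)=\sum_j \hat f(k-j)\hat g(j)$, so that
\begin{equation*}
\|fg\|_{H^{r'}}^2 = \sum_k \langle k\rangle^{2r'}\Bigl|\sum_j \hat f(k-j)\hat g(j)\Bigr|^2,
\end{equation*}
where $\langle k\rangle=(1+|k|^2)^{1/2}$. First I would apply the elementary Peetre-type inequality $\langle k\rangle^{r'}\le C(\langle k-j\rangle^{r'}+\langle j\rangle^{r'})$, valid for $r'\ge 0$, to split the sum into two pieces; in the first piece the weight $\langle k-j\rangle^{r'}$ lands on $\hat f$ and in the second piece $\langle j\rangle^{r'}$ lands on $\hat g$.

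Next I would estimate each piece by Young's convolution inequality. For the piece with the weight on $\hat g$, one writes $\langle k\rangle^{r'}|\widehat{fg}(k)|\lesssim (|\hat f|\ast(\langle\cdot\rangle^{r'}|\hat g|))(k)$ and invokes $\|a\ast b\|_{\ell^2}\le\|a\|_{\ell^1}\|b\|_{\ell^2}$, giving a bound $\|\hat f\|_{\ell^1}\|g\|_{H^{r'}}$. Since $r>1/2$, the sum $\sum_k\langle k\rangle^{-2r}$ converges, so by Cauchy--Schwarz $\|\hat f\|_{\ell^1}\le C\|f\|_{H^r}$; this is exactly where the hypothesis $r>1/2$ is used. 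For the piece with the weight $\langle k-j\rangle^{r'}$ on $\hat f$, one instead bounds $\langle k-j\rangle^{r'}|\hat f(k-j)|$ by $\langle k-j\rangle^{r}|\hat f(k-j)|$ (legitimate because $r'\le r$ and $\langle k-j\rangle\ge 1$), so this factor is in $\ell^2$ with norm $\le\|f\|_{H^r}$, while $\hat g\in\ell^1$; but here $g$ need only be in $H^{r'}$, and $r'$ may be $\le 1/2$, so $\hat g$ need not be summable. The fix is to put the $\ell^2$ norm on the $f$-factor and the $\ell^1$ norm on $|\hat g|$ only after first transferring enough weight: write $\langle k-j\rangle^{r'}\le \langle k-j\rangle^{r'-r}\langle k-j\rangle^{r}$ is unhelpful; instead in this regime use the complementary split so that whenever $|j|\gtrsim|k-j|$ the weight is carried by $g$ and Young's inequality with the roles of $\ell^1,\ell^2$ reversed applies with $\|\langle\cdot\rangle^{r'}\hat g\|_{\ell^2}=\|g\|_{H^{r'}}$ and $\|\hat f\|_{\ell^1}\le C\|f\|_{H^r}$ as before. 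In other words, the dyadic bookkeeping should always be arranged so that the $\ell^1$ norm falls on $\hat f$ (controlled by $\|f\|_{H^r}$ with $r>1/2$) and the $\ell^2$ norm falls on the $H^{r'}$-side.

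The main obstacle is precisely this asymmetry: one cannot symmetrically place an $\ell^1$ norm on the low-regularity factor $g$, so the Peetre splitting must be done with care to ensure the summability is always extracted from $f$. Once the combinatorial split is set up correctly, the remaining steps — Young's inequality and the convergence of $\sum\langle k\rangle^{-2r}$ — are routine. I would remark that the constant $C$ depends only on $r$ (and harmlessly on $L$, which is fixed), and that the argument is standard; alternatively one may simply cite \cite{Ambrose:2003:LWP-vortex-sheet-with-surface-tension} or the analogous statement in \cite{Ambrose-Masmoudi:2005:zero-surface-tension-2d-water-wave}, since the lemma is quoted there verbatim.
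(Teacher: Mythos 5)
The paper does not prove this lemma; it simply quotes it as \cite[Lemma 3.9]{Ambrose:2003:LWP-vortex-sheet-with-surface-tension}, so your closing remark about citing Ambrose is exactly what the paper does. However, the self-contained Fourier argument you sketch has a genuine gap in the case where the weight lands on $\hat f$, and the ``fix'' you describe does not actually close it.

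Concretely, after the Peetre split $\langle k\rangle^{r'}\lesssim\langle k-j\rangle^{r'}+\langle j\rangle^{r'}$, the $\langle j\rangle^{r'}$-piece is handled correctly by Young ($\ell^1$ on $\hat f$, $\ell^2$ on $\langle\cdot\rangle^{r'}\hat g$). But for the $\langle k-j\rangle^{r'}$-piece, Young's inequality is unavailable in general: putting $\ell^1$ on $\langle\cdot\rangle^{r'}\hat f$ requires $r-r'>\tfrac12$, while putting $\ell^1$ on $\hat g$ requires $r'>\tfrac12$, and neither need hold (try $r=\tfrac34$, $r'=\tfrac14$). Your suggested remedy---restrict to $|j|\gtrsim|k-j|$ so the weight goes to $g$---only re-derives the first piece and leaves the complementary regime $|j|\lesssim|k-j|$ untreated; one cannot simply ``arrange'' for the $\ell^1$ norm to always fall on $\hat f$ because the Peetre split genuinely produces two terms and the roles of $f$ and $g$ are not symmetric when $r'<r$. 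The standard way to close this case is not Young but Cauchy--Schwarz in $j$ with the compound weight: write
\begin{equation*}
\sum_j \langle k-j\rangle^{r'}|\hat f(k-j)||\hat g(j)|
=\sum_j \bigl(\langle k-j\rangle^{r'-r}\langle j\rangle^{-r'}\bigr)\bigl(\langle k-j\rangle^{r}|\hat f(k-j)|\,\langle j\rangle^{r'}|\hat g(j)|\bigr),
\end{equation*}
apply Cauchy--Schwarz, note $\sup_k\sum_j\langle k-j\rangle^{-2(r-r')}\langle j\rangle^{-2r'}<\infty$ because the exponents sum to $2r>1$, and then sum the remaining factor over $k$ by Fubini to get $\|f\|_{H^r}\|g\|_{H^{r'}}$. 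Alternatively one can bypass the case analysis entirely by bilinear complex interpolation between the endpoints $H^r\times L^2\to L^2$ (Sobolev embedding $H^r\hookrightarrow L^\infty$) and $H^r\times H^r\to H^r$ (algebra property). Either of these closes the argument; Young alone does not.
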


To study \eqref{eq-a35},
\begin{lemma}\label{le-bound-on-R}
Let $\theta\in H^{r+1}$ and $f\in H^{r-1/2}$ with $r\ge 4$. Then for $0\le k\le r+1$,
\begin{equation}\label{eq-a33}
\|\mathcal{R}(f)\|_{H^k} \le C(\E).
\end{equation}
\end{lemma}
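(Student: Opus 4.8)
The plan is to estimate the operator
$$
\mathcal{R}(f)= \frac {\operatorname{p.v.}}{L} \int_0^L f(\beta) \xi_\beta \left(\cot \frac {\pi(\xi(s)-\xi(\beta))}{L}- \frac {1}{\xi_\beta} \cot\frac {\pi(s-\beta)}{L} \right)d\beta
$$
by first isolating the ``smooth'' part of the kernel. The key observation is that the singular $1/(s-\beta)$ behaviour exactly cancels: writing $\cot\tfrac{\pi x}{L}=\tfrac{L}{\pi x}+(\text{bounded smooth})$ for $|x|<L$, both cotangents contribute the same leading singularity (since $\xi(s)-\xi(\beta)=\xi_\beta(s-\beta)+O((s-\beta)^2)$ and $\xi_\beta$ factors out of the second term), so the difference
$$
b(s,\beta):=\xi_\beta\cot \frac {\pi(\xi(s)-\xi(\beta))}{L}- \cot\frac {\pi(s-\beta)}{L}
$$
is in fact a genuinely smooth (non-singular) kernel, controlled pointwise and in $H^k$ in $s$ by a polynomial in $\|\xi\|_{H^{r+1}}$ and the non-self-intersecting lower bound $|\xi(s)-\xi(\beta)|\gtrsim |s-\beta|$ inherited from \eqref{eq-non-intersect-time-0}. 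So $\mathcal{R}(f)(s)=\tfrac1L\int_0^L f(\beta)\,b(s,\beta)\,d\beta$ with no principal value needed, and the problem reduces to bounding an integral operator with a smooth kernel.

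First I would make the cancellation precise: expand $\cot\tfrac{\pi(\xi(s)-\xi(\beta))}{L}$ using the identity $\cot w = \tfrac1w + g(w)$ with $g$ real-analytic and $2\pi$-periodic away from its poles, and likewise for $\cot\tfrac{\pi(s-\beta)}{L}$. The polar parts combine to
$$
\xi_\beta\cdot\frac{L}{\pi(\xi(s)-\xi(\beta))}-\frac{L}{\pi(s-\beta)}
=\frac{L}{\pi(s-\beta)}\left(\frac{\xi_\beta(s-\beta)}{\xi(s)-\xi(\beta)}-1\right),
$$
and the factor in parentheses vanishes to first order at $\beta=s$ because $\xi(s)-\xi(\beta)=(s-\beta)\int_0^1\xi_s(\beta+\tau(s-\beta))\,d\tau$; dividing by $(s-\beta)$ leaves something bounded with $r$ derivatives controlled by $\|\theta\|_{H^{r+1}}$ (note $|\xi_s|\equiv1$, so $\xi_s=e^{i\theta}$ and all $\xi$-regularity comes from $\theta$). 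The remaining $g$-terms are immediately smooth using the same integral-remainder trick and the lower bound $c$ from \eqref{eq-non-intersect-time-0}. Thus $b(s,\cdot)\in H^{r+1}$ uniformly in... wait, more carefully: $b$ is smooth jointly, with $\|b\|_{H^k_s}$ bounded uniformly in $\beta$ by $C(\mathcal E)$ for $0\le k\le r+1$.

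Next I would conclude by estimating $\|\mathcal R(f)\|_{H^k}$. For $k=0$ this is Cauchy--Schwarz: $\|\mathcal R(f)\|_{L^2}\le \tfrac1L\|f\|_{L^2}\sup_s\|b(s,\cdot)\|_{L^2}\le C(\mathcal E)\|f\|_{L^2}$. For $1\le k\le r+1$, differentiate under the integral: $\partial_s^k\mathcal R(f)(s)=\tfrac1L\int_0^L f(\beta)\,\partial_s^k b(s,\beta)\,d\beta$, and each $\partial_s^k b$ is again a smooth kernel, bounded in $L^2_s$ uniformly in $\beta$ by $C(\mathcal E)$ via Lemma \ref{le-algebra} (products of $\theta$-derivatives and the reciprocal of the non-self-intersecting quantity, which itself has $H^k$ bounds once one controls it from below). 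Since $f\in H^{r-1/2}\subset L^2$ and $k\le r+1$, this gives $\|\mathcal R(f)\|_{H^k}\le C(\mathcal E)\|f\|_{L^2}\le C(\mathcal E)$, absorbing $\|f\|_{L^2}$ into the polynomial since $f$ is one of the energy quantities.

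The main obstacle is the bookkeeping needed to show that \emph{every} $s$-derivative of the kernel $b(s,\beta)$ stays bounded in $L^2_s$ uniformly in $\beta$ up to order $r+1$: one must differentiate the composite functions $\cot\tfrac{\pi(\xi(s)-\xi(\beta))}{L}$ and the reciprocal $1/(\xi(s)-\xi(\beta))\cdot(s-\beta)$ many times, using Fa\`a di Bruno, and at each stage reinvoke the lower bound from \eqref{eq-non-intersect-time-0} (and its consequence that $(\xi(s)-\xi(\beta))/(s-\beta)$ is bounded away from zero, hence its powers and derivatives are controlled) together with the algebra Lemma \ref{le-algebra} to keep the product estimates inside a polynomial of $\mathcal E$. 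This is routine but tedious; once the uniform-in-$\beta$ kernel bounds are in hand the $L^2$-to-$H^k$ mapping estimate is immediate from Minkowski's integral inequality.
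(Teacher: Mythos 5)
The paper does not actually prove this lemma; it defers to \cite[Lemma 3.5]{Ambrose:2003:LWP-vortex-sheet-with-surface-tension}, so the comparison is with that argument. Your opening moves are in the same spirit: you correctly isolate the polar parts of the two cotangents, observe that
$\xi_\beta\frac{L}{\pi(\xi(s)-\xi(\beta))}-\frac{L}{\pi(s-\beta)}=\frac{L}{\pi(s-\beta)}\bigl(\frac{\xi_\beta(s-\beta)}{\xi(s)-\xi(\beta)}-1\bigr)$
is bounded via the integral-remainder form of $\xi(s)-\xi(\beta)$ and the chord-arc bound, and correctly note that no principal value is then needed. That is exactly the starting point of Ambrose's proof.

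The gap is in the derivative count at the top order, and it is fatal to the strategy of ``differentiate the kernel $k$ times in $s$ and apply Minkowski.'' After the cancellation, the smooth part of the kernel is essentially
$G(s,\beta)=-\frac{1}{q(s,\beta)}\int_0^1\int_0^1\tau\,\xi_{ss}\bigl(\beta+\sigma\tau(s-\beta)\bigr)\,d\sigma\,d\tau$, $q(s,\beta)=\int_0^1\xi_s(\beta+\tau(s-\beta))\,d\tau$;
it already contains $\xi_{ss}$, i.e.\ $\theta_s$. Since $\theta\in H^{r+1}$ gives only $\xi\in H^{r+2}$, the kernel has at most $r$ usable $s$-derivatives ($\partial_s^{r+1}G$ would require $\xi^{(r+3)}\sim\theta^{(r+2)}$, which is not a function under the hypotheses), so your claim that $\partial_s^k b(\cdot,\beta)$ is bounded in $L^2_s$ uniformly in $\beta$ for all $k\le r+1$ fails at $k=r+1$ (and is already delicate at $k=r$ because the Leibniz terms in which derivatives fall on the explicit factor $\frac{1}{s-\beta}$ produce high-order poles whose cancellation you have not tracked). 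A telltale sign is that your argument uses only $\|f\|_{L^2}$: it would prove $\mathcal{R}:L^2\to H^{r+1}$ boundedly, a smoothing of $r+1$ derivatives against a kernel with only $r$ derivatives' worth of regularity in $s$, which is too strong. The hypothesis $f\in H^{r-1/2}$ is there precisely because the top derivatives must be moved onto $f$. The standard repair (and what Ambrose does) is to use the divided-difference identity $(\partial_s+\partial_\beta)\frac{g(s)-g(\beta)}{s-\beta}=\frac{g'(s)-g'(\beta)}{s-\beta}$ (and its analogue for the periodic cotangent kernels) to convert $\partial_s$ acting on the kernel into $-\partial_\beta$ plus a kernel of the same type built from $g'$, then integrate by parts in $\beta$ so that the excess derivatives land on $f\in H^{r-1/2}$; only the remaining $\le r$ derivatives are absorbed by the kernel, and Lemma \ref{le-algebra} together with the chord-arc lower bound closes the product estimates. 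With that redistribution your outline becomes a correct proof; without it, the cases $k=r+1$ (and the uniform-in-$\beta$ bounds at $k=r$) are unproved.
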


To study the operators in \eqref{eq-a36} and \eqref{eq-a37}, we have
\begin{lemma}\label{le-more-error-bounds}
Let $f=\bar{z}_t\circ \ka^{-1}\in H^{r+\frac 12}$.
\begin{itemize}
\item[1.] For $0\le k\le r+1$,
\begin{equation}\label{eq-a38}
\|[\mathfrak{H}, e^{2i\theta(s)}] \frac {f_s}{\xi_s}\|_{H^k} \le C(\E).
\end{equation}

\item[2.] For $0\le k\le r$,
\begin{equation}\label{eq-a39}
\|[f, \mathfrak{H}] \frac {\partial_s u}{\xi_s}\|_{H^{k}} \le C(\E).
\end{equation}
\end{itemize}
\end{lemma}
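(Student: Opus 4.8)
The plan is to read off the explicit kernels in \eqref{eq-a36}--\eqref{eq-a37} and to exploit the defining feature of a commutator: the numerator carries a difference $g(s)-g(\beta)$ that removes one order of the singularity of the Cauchy kernel, so that each of these operators is smoothing of order one and, beyond that, as regular as the curve and the function $g$ placed in the commutator slot allow. For part (1) the relevant $g$ is $e^{2i\theta}$, which inherits the regularity $H^{r+1}$ of $\theta$; for part (2) it is $\bar z_t\circ\ka^{-1}$, of regularity $H^{r+1/2}$. This one-derivative gain is exactly what turns $f_s/\xi_s$ (with $f\in H^{r+1/2}$, hence $f_s\in H^{r-1/2}$) into something in $H^{r+1}$, and $\partial_s u/\xi_s$ (with $u\in H^{r}$) into something in $H^{r}$, which is why the admissible ranges are $k\le r+1$ and $k\le r$ respectively.

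First I would peel off the flat leading part of the curved transform. By \eqref{eq-x1} one has $\mathfrak{H}=-i\H-i\mathcal{R}$ with $\mathcal{R}$ as in \eqref{eq-a35}, so that $[\mathfrak{H},g]h=-i[\H,g]h-i[\mathcal{R},g]h$. The remainder piece $[\mathcal{R},g]h=\mathcal{R}(gh)-g\,\mathcal{R}(h)$ is controlled by the argument already used for Lemma \ref{le-bound-on-R}: the kernel of $\mathcal{R}$ is the desingularized difference $\xi_\beta\bigl(\cot\frac{\pi(\xi(s)-\xi(\beta))}{L}-\frac{1}{\xi_\beta}\cot\frac{\pi(s-\beta)}{L}\bigr)$, and multiplying its integrand by the bounded factor $g(\beta)-g(s)$ only improves the estimate; here the hidden factor $\frac{s-\beta}{\xi(s)-\xi(\beta)}$ is bounded precisely because of the non-self-intersection condition \eqref{eq-non-intersect-time-0}, which is why the chord--arc constant of $\Sigma(t)$ must be carried along in every bound (it is, since it is controlled a priori by the energy together with continuity in time).

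For the flat piece $[\H,g]h$ I would use the standard Calder\'on-commutator mechanism: writing $g(s)-g(\beta)=(s-\beta)\int_0^1 g_s(\beta+\tau(s-\beta))\,d\tau$ and absorbing the factor $(s-\beta)$ into $\cot\frac{\pi(s-\beta)}{L}$ produces a bounded kernel whose $s$-derivatives remain Cauchy-type kernels, so that $[\H,g]$ maps $H^{m}$ to $H^{m+1}$ with norm $\le C\|g\|_{H^{m+1}}$ in the relevant range of $m$. To reach the full range $0\le k\le r+1$ in part (1), apply $\partial_s^{k}$, distribute by the Leibniz rule, and whenever a derivative would otherwise fall on $f$ (which carries only $H^{r+1/2}$ regularity) integrate by parts in $\beta$ so that it lands on the kernel instead, recognizing the resulting operators as $L^2$-bounded Calder\'on commutators rather than differentiating the singular kernel directly. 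Every remaining product is then handled with the algebra property Lemma \ref{le-algebra} and Sobolev embedding, and all norms that appear---$\|\theta\|_{H^{r+1}}$ (hence $\|e^{2i\theta}-1\|_{H^{r+1}}$ and $\|\xi_s^{\pm1}\|_{H^{r+1}}$), $\|f\|_{H^{r+1/2}}=\|\bar z_t\circ\ka^{-1}\|_{H^{r+1/2}}$, the length $L(t)$, and the lower bound for $|z_\alpha|$---are components of, or are dominated by polynomials in, $\E(t)$.

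Part (2) is entirely parallel, now with $g=\bar z_t\circ\ka^{-1}\in H^{r+1/2}$ in the commutator slot and $h=\partial_s u/\xi_s$ with $u=\delta_s\in H^{r}$; the one gained derivative restores $\partial_s u$ to an $H^{r}$ object, which is why the range is only $0\le k\le r$. Here one additionally invokes the identities of Section \ref{sec:serveal-identities}, which express $z_t\circ\ka^{-1}$ through the vortex density $\gamma$ and $\delta$, together with the energy, to guarantee $\|\bar z_t\circ\ka^{-1}\|_{H^{r+1/2}}\le C(\E)$. I expect the main obstacle to be the top-order bookkeeping in both parts: since $H^{r+1/2}$ does not embed into $C^{r}$, the ``integral of the derivative along the chord'' representation of $g(s)-g(\beta)$ must be combined with Minkowski's inequality in $L^2_\beta$ rather than with $L^\infty$ bounds on kernels, and one has to check that no term ever calls for more than $r+1$ derivatives of $\theta$, more than $r+\tfrac12$ of $\bar z_t\circ\ka^{-1}$, or more than $r$ of $u$---this accounting is exactly what pins down the exponents in the statement.
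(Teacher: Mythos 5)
The paper itself gives no proof of this lemma: it only remarks that ``the proofs of these lemmas are similar to that of \cite[Lemma 3.5]{Ambrose:2003:LWP-vortex-sheet-with-surface-tension}'' and omits them. Your proposal is a reasonable and essentially faithful unpacking of exactly that strategy, and it is consistent with the tools the paper has already imported (Lemmas \ref{le-algebra}, \ref{le-commutators-1}, \ref{le-commutators-2}, and the decomposition \eqref{eq-x1}). The derivative bookkeeping is right: with $g=e^{2i\theta}\in H^{r+1}$ and $f_s/\xi_s\in H^{r-1/2}$ the commutator output is capped by the regularity of $g$ at $H^{r+1}$, hence $k\le r+1$; with $g=\bar z_t\circ\ka^{-1}\in H^{r+1/2}$ and $\partial_s u/\xi_s\in H^{r-1}$ the output sits in $H^{r+1/2}\subset H^r$, hence $k\le r$.

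Two small cautions, neither of which breaks the argument. First, your in-line claim that $[\H,g]$ maps $H^m\to H^{m+1}$ with norm $\lesssim\|g\|_{H^{m+1}}$ is not the precise form of the commutator estimate: the paper's Lemmas \ref{le-commutators-1}--\ref{le-commutators-2} require $\psi$ to carry \emph{more} than one extra derivative (e.g.\ $\psi\in H^r$, $r\ge 3$, for $H^{r-2}\to H^r$), and the operator norm is controlled by $\|\psi\|_{H^r}$, not $\|\psi\|_{H^{m+1}}$. In your application $g$ is always highly regular ($H^{r+1}$ or $H^{r+1/2}$ with $r\ge 4$), so the hypotheses of the paper's imported lemmas are satisfied and the conclusion goes through; you should just quote those lemmas rather than the cruder one-derivative gain. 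Second, note that \eqref{eq-a37} is already written with the \emph{flat} cotangent kernel, so for part~(2) the split into $\H$ plus remainder is in some sense built into the paper's formula and the commutator estimate applies directly; your extra remainder term $[\mathcal R,g]h$ is only needed for part~(1) where the kernel is the curved one $\cot\tfrac{\pi(\xi(s)-\xi(\beta))}{L}$. Beyond these bookkeeping points your argument, including the integration by parts in $\beta$ to avoid over-differentiating $f$ and the chord--arc control via \eqref{eq-non-intersect-time-0}, matches the approach the paper points to.
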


The proofs of these lemmas are similar to that of \cite[Lemma 3.5]{Ambrose:2003:LWP-vortex-sheet-with-surface-tension}. We omit the proofs.

By using Theorem \ref{thm-Verchota}, Lemma \ref{le-bound-on-R} and \eqref{eq-a38} in Lemma \ref{le-more-error-bounds}, we conclude that the error terms in $\theta_t$ and $u_t$ are bounded by the Energy. To get the $H^{r-\frac 12}$ boundedness of $\theta_t\circ \kappa^{-1}$, we need to invoke the boundedness of the operator $ I-\mathcal{K}$; the proof of this fact is similar to that of \cite[Lemma 6.4, d.]{Sijue:1999:3d-local}
\begin{proposition}\label{prop-error-theta-t} For $0\le k\le r+1$,
$$\|\tilde\phi\|_{H^k} \le C(\E).$$
\end{proposition}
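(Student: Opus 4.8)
The plan is to recall the explicit form of $\tilde\phi$ from \eqref{eq-36},
$$
\tilde\phi(\theta,u)= -i\operatorname{Re}\mathfrak{H}(u)+\operatorname{Re}\mathfrak{H}\bigl(\theta_t\circ \ka^{-1}\bigr)-\operatorname{Im}[\mathfrak{H},e^{2i\theta}]\frac {\partial_s(\bar{z}_t\circ \ka^{-1})}{\xi_s}+\mathcal{R}(u),
$$
and bound each of the four summands in $H^k$ for $0\le k\le r+1$ separately, so it suffices to produce a bound $C(\E)$ for each. I would first dispose of the two terms that are already covered by the lemmas in this section: the term $\mathcal{R}(u)$ is controlled by Lemma \ref{le-bound-on-R} applied with $f=u$ (noting $u=\delta_s\in H^r\subset H^{r-1/2}$, so the hypothesis is met), and the commutator term $[\mathfrak{H},e^{2i\theta}]\partial_s(\bar z_t\circ\ka^{-1})/\xi_s$ is controlled directly by part~1 of Lemma \ref{le-more-error-bounds} with $f=\bar z_t\circ\ka^{-1}$; taking imaginary parts only helps. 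For the term $-i\operatorname{Re}\mathfrak{H}(u)$ I would invoke the identity \eqref{eq-a12}, which says $\operatorname{Re}\mathfrak{H}(f)=\operatorname{Im}\mathcal{R}(f)$ for scalar $f$, so this term is again reduced to Lemma \ref{le-bound-on-R} with $f=u$.

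The only genuinely nontrivial summand is $\operatorname{Re}\mathfrak{H}\bigl(\theta_t\circ\ka^{-1}\bigr)$. By \eqref{eq-a12} again this equals $\operatorname{Im}\mathcal{R}(\theta_t\circ\ka^{-1})$, so by Lemma \ref{le-bound-on-R} it is enough to show $\theta_t\circ\ka^{-1}\in H^{r-1/2}$ with norm controlled by $C(\E)$; this is exactly the point flagged in the text before the statement. Here I would use the first equation of \eqref{eq:complex-coordinate} and the structure $z_{tt}+i=iae^{i\theta}-\partial_\alpha P/|z_\alpha|\cdot e^{i\theta}$ together with the relation $\operatorname{Im}(\partial_\alpha z_{tt}e^{-i\theta}/|z_\alpha|)=\theta_{tt}+\cdots$ from \eqref{eq-22}; but more efficiently, one reads off $\theta_t$ from $\theta_t = i\mathcal{H}(u) - \delta\partial_s\theta + \tilde\phi$ circularly, so instead I would go back to the expression $\partial_t \ln z_\alpha = u\circ\ka + i\theta_t$ from \eqref{eq-a46}, i.e. $\theta_t\circ\ka^{-1} = \operatorname{Im}\bigl((\partial_t z_\alpha/z_\alpha)\circ\ka^{-1}\bigr) = \operatorname{Im}\bigl(\partial_s(z_t\circ\ka^{-1})/\xi_s\bigr)$. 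Since $\bar z_t = \mathfrak{H}\bar z_t$, the boundary value $z_t\circ\ka^{-1}$ is essentially the Cauchy integral of its own tangential derivative; applying $I-\mathcal{K}$ (equivalently $I-\operatorname{Re}\mathfrak{H}$) and inverting via Theorem \ref{thm-Verchota} (valid in the periodic space-time slab setting by Remark \ref{re-5}), one expresses $z_t\circ\ka^{-1}$ in terms of quantities already appearing in $\E$ — namely the vortex density $\gamma$, the length $L$, and $\theta$ — gaining half a derivative from the single-layer-type smoothing, so that $z_t\circ\ka^{-1}\in H^{r+1/2}$, hence $\theta_t\circ\ka^{-1}\in H^{r-1/2}$, with norm $\le C(\E)$. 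This is the argument the text attributes to \cite[Lemma 6.4, d.]{Sijue:1999:3d-local}.

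The main obstacle is precisely this last regularity claim for $\theta_t\circ\ka^{-1}$: one must track carefully that inverting $I-\mathcal{K}$ on the Lipschitz (here smooth, but only $H^{r+1}$ in $\theta$) interface gives an operator bounded on $H^{r-1/2}$ with operator norm depending only on quantities in $\E$ (the chord-arc constant $c$, $\|\theta\|$, $L$), and that the right-hand side built from $\gamma$, $z_t$, $a$, $P=-\partial_s\theta$ indeed lies in $H^{r-1/2}$ — which in turn uses the product estimate Lemma \ref{le-algebra} and the boundedness of the error operators \eqref{eq-a35}--\eqref{eq-a37}. Once $\theta_t\circ\ka^{-1}\in H^{r-1/2}$ with the stated bound is in hand, assembling the four pieces and using that $\E$ controls $\|\theta\|_{H^{r+1}}$, $\|u\|_{H^r}$, $\|\delta\|_{H^{r+1/2}}$, $\|\gamma\|_{H^r}$ and $L$ gives $\|\tilde\phi\|_{H^k}\le C(\E)$ for all $0\le k\le r+1$, completing the proof.
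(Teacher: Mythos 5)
Your proposal follows the same structure the paper indicates: it splits $\tilde\phi$ into the four terms of \eqref{eq-36}, disposes of $\mathcal{R}(u)$, $\operatorname{Re}\mathfrak{H}(u)$, and the commutator term via Lemma \ref{le-bound-on-R}, identity \eqref{eq-a12}, and Lemma \ref{le-more-error-bounds}, and isolates the genuinely nontrivial estimate --- the $H^{r-1/2}$ control of $\theta_t\circ\ka^{-1}$ via the invertibility of $I-\mathcal{K}$ (referencing Wu's Lemma 6.4.d) --- exactly as the paper's own terse proof sketch does. The only small slip is that $u=\delta_s$ lies in $H^{r-1/2}$ (since $\delta\in H^{r+1/2}$), not $H^{r}$, but this makes no difference since $H^{r-1/2}$ is precisely what Lemma \ref{le-bound-on-R} requires.
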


\begin{proposition}\label{prop-error-u-t} For $0\le k\le r-1$,
$$\|\tilde\psi\|_{H^{k+1/2}}\le C(\E).$$
\end{proposition}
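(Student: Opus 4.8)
The plan is to estimate the two terms of $\tilde\psi(\theta,u)=-u\operatorname{Re}\bigl(e^{-i\theta}\partial_s(z_t\circ \ka^{-1})\bigr)+\bigl(\theta_t\circ \ka^{-1}\bigr)^2$ separately in $H^{k+1/2}$ for $0\le k\le r-1$, using the algebra Lemma \ref{le-algebra} together with the control of $\theta_t\circ\ka^{-1}$ already obtained in the course of Proposition \ref{prop-error-theta-t}. First I would record the key regularity inputs: by the energy $\E(t)$ we have $\theta\in H^{r+1}$, $u=\delta_s\in H^r$, and (since $e^{i\theta}=z_\alpha/|z_\alpha|$ composed with $\ka^{-1}$ equals $\xi_s$) the unit tangent $e^{i\theta}\in H^r$; moreover the velocity on the interface satisfies $z_t\circ\ka^{-1}\in H^{r+1/2}$, as used in Lemma \ref{le-more-error-bounds}, so $\partial_s(z_t\circ\ka^{-1})\in H^{r-1/2}$. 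Finally, from the proof of Proposition \ref{prop-error-theta-t} one extracts $\|\theta_t\circ\ka^{-1}\|_{H^{r-1/2}}\le C(\E)$ (this is exactly the statement invoked there via the invertibility of $I-\mathcal{K}$).

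For the first term, $e^{-i\theta}\in H^r$ and $\partial_s(z_t\circ\ka^{-1})\in H^{r-1/2}$, so by Lemma \ref{le-algebra} (with the high factor in $H^r$ and $r-1/2\le r$) the product lies in $H^{r-1/2}$ with norm $\le C(\E)$; taking the real part does not change this. Then multiplying by $u\in H^r$, and using Lemma \ref{le-algebra} once more with $u$ as the $H^r$ factor and $r-1/2$ as the lower index, gives $u\operatorname{Re}(e^{-i\theta}\partial_s(z_t\circ\ka^{-1}))\in H^{r-1/2}$ bounded by $C(\E)$. Since $k+1/2\le r-1/2$ for $k\le r-1$, this controls the $H^{k+1/2}$ norm as well. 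For the second term, $\theta_t\circ\ka^{-1}\in H^{r-1/2}$ and $r-1/2>1/2$, so by Lemma \ref{le-algebra} the square $(\theta_t\circ\ka^{-1})^2\in H^{r-1/2}$ with norm $\le C(\|\theta_t\circ\ka^{-1}\|_{H^{r-1/2}})\le C(\E)$; again $H^{r-1/2}\subset H^{k+1/2}$ for the stated range of $k$. Adding the two contributions yields $\|\tilde\psi\|_{H^{k+1/2}}\le C(\E)$.

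The main obstacle is not the algebra bookkeeping but securing the auxiliary estimate $\|\theta_t\circ\ka^{-1}\|_{H^{r-1/2}}\le C(\E)$ at the right level of regularity: one must start from $\theta_t=i\mathfrak{H}(u\circ\ka)+\phi$, pass to arc-length to get $\partial_t\theta=\H(u)-\delta\partial_s\theta+\tilde\phi$, and then observe that $\mathcal{H}(u)$ gains no derivatives while $u\in H^r$, $\delta\in H^{r+1/2}$, $\partial_s\theta\in H^r$, and $\tilde\phi\in H^{r+1}$ by Proposition \ref{prop-error-theta-t}; the product $\delta\partial_s\theta$ is handled by Lemma \ref{le-algebra}, so in fact $\partial_t\theta\in H^r$, which is even better than $H^{r-1/2}$. (The subtlety flagged in the text about needing $I-\mathcal{K}$ invertibility enters only if one instead wants to recover $\theta_t$ directly from the identity $(I-\mathfrak H)(\bar z_t)=0$ type relations; here the arc-length system gives $\partial_t\theta$ directly.) Once that input is in hand, the rest is a routine application of the product estimate, and care only needs to be taken that every factor with half-integer Sobolev index is paired with a factor of index at least $r>1/2$ so that Lemma \ref{le-algebra} applies.
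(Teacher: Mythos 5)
Your proposal is essentially correct and follows the approach the paper implicitly intends (the paper omits the proof, only pointing to Lemma \ref{le-algebra}, Lemma \ref{le-more-error-bounds}, and the Verchota theorem for the underlying $\theta_t\circ\ka^{-1}$ control): split $\tilde\psi$ into its two summands, apply the product estimate, and feed in $\theta_t\circ\ka^{-1}\in H^{r-1/2}$. Two points merit correction, though neither breaks the argument. First, you assert $u=\delta_s\in H^r$; since the energy gives $\delta\in H^{r+1/2}$ and the half-derivative appearing in $\E^1_k$ controls $u$ only up to $H^{r-1/2}$, you in fact have $u\in H^{r-1/2}$. This is harmless because both factors in the product $u\cdot \operatorname{Re}(e^{-i\theta}\partial_s(z_t\circ\ka^{-1}))$ then sit in $H^{r-1/2}$ with $r-1/2>1/2$, which is still within the hypotheses of Lemma \ref{le-algebra}. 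Second, your parenthetical suggestion that the invertibility of $I-\mathcal{K}$ can be sidestepped by reading $\partial_t\theta$ off the arc-length system is misleading: that equation involves $\tilde\phi$, and $\tilde\phi$ contains $\operatorname{Re}\mathfrak{H}(\theta_t\circ\ka^{-1})$, so the dependence on $\theta_t\circ\ka^{-1}$ is circular and is resolved precisely by inverting $I\pm\mathcal{K}$. What you are actually doing is importing the conclusion of Proposition \ref{prop-error-theta-t}, which already used the Verchota argument; that is a legitimate shortcut provided you cite it as such, rather than present it as an independent route around the operator inversion.
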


Finally the error term in $a_s$ is bounded by the Energy, too. 
\begin{proposition}\label{prop-error-a-s} For $0\le k\le r-2$,
$$\|\tilde\omega\|_{H^{k+1/2}}\le C(\E).$$
\end{proposition}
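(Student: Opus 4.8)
The plan is to expand $\tilde\omega$ term by term according to its definition \eqref{eq-38} and bound each of the seven summands; since $H^{r-3/2}\hookrightarrow H^{k+1/2}$ for $0\le k\le r-2$, it suffices to put each summand in $H^{r-3/2}$ with norm $\le C(\E)$. The regularities we may use are those controlled by $\E(t)$: $\theta\in H^{r+1}$, $\delta\in H^{r+1/2}$ (hence $u=\delta_s\in H^{r-1/2}$), $\bar z_t\circ\ka^{-1}\in H^{r+1/2}$ (as in Lemma \ref{le-more-error-bounds}), and the bound $\|a-1\|_{H^{r-1}}\le C(\E)$ coming from the explicit formula \eqref{eq-a7} and the boundedness of $(I+\mathcal K^*)^{-1}$ (Theorem \ref{thm-Verchota}). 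We shall repeatedly invoke: the two equations \eqref{eq-arclength}; the first‐order relation $\theta_t\circ\ka^{-1}=\partial_t\theta+\delta\partial_s\theta=\H(u)+\tilde\phi\in H^{r-1/2}$; the kinematic identity \eqref{eq-19} in the arc‐length coordinate, $z_{tt}\circ\ka^{-1}=-i+(ia-\partial_s^2\theta)e^{i\theta}\in H^{r-1}$; Lemma \ref{le-algebra}; boundedness of $\H$ on every $H^m$; Lemma \ref{le-2} for $[\partial_t,\mathfrak H]$; and the ``bounded‐kernel'' smoothing estimates underlying Lemmas \ref{le-bound-on-R} and \ref{le-more-error-bounds} for $\mathcal R$, $[\mathfrak H,e^{2i\theta}]$ and $[\bar z_t\circ\ka^{-1},\mathfrak H]$.

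Six of the summands are routine. The commutator term $i[\bar z_t\circ\ka^{-1},\mathfrak H]\frac{\partial_s u}{\xi_s}$ is exactly \eqref{eq-a37} and lies in $H^r$ by \eqref{eq-a39}. For $u\,\theta_t\circ\ka^{-1}$ both factors lie in $H^{r-1/2}$ (using the relation above and Proposition \ref{prop-error-theta-t}), so Lemma \ref{le-algebra} gives $H^{r-1/2}$; likewise $\operatorname{Im}\bigl(e^{-i\theta}\partial_s(\bar z_t\circ\ka^{-1})\bigr)u$ has factors $e^{-i\theta}\in H^{r+1}$, $\partial_s(\bar z_t\circ\ka^{-1})\in H^{r-1/2}$, $u\in H^{r-1/2}$, hence lies in $H^{r-1/2}$; and $\partial_s^2\theta\,\partial_s\theta\in H^{r-1}$. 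Next, $\H(\delta\,\partial_s u)=\H(\delta\,\delta_{ss})\in H^{r-3/2}$ by Lemma \ref{le-algebra} and boundedness of $\H$. Finally, for $\mathcal R(\partial_t u+\delta\partial_s u)$ we substitute the second equation of \eqref{eq-arclength} to get $\partial_t u+\delta\partial_s u=\partial_s^3\theta-a\partial_s\theta+\tilde\psi$, with $\partial_s^3\theta\in H^{r-2}$, $a\partial_s\theta\in H^{r-1}$ and $\tilde\psi\in H^{r-1/2}$ (Proposition \ref{prop-error-u-t}); the first argument is below the hypothesis of Lemma \ref{le-bound-on-R}, but the same kernel estimate that proves that lemma shows that $\mathcal R$ is strongly smoothing (the gain is limited only by $\xi\in H^{r+2}$), so $\mathcal R(\partial_s^3\theta)\in H^{r+1}$ with norm $\le C(\E)$, while the other two pieces are handled by Lemma \ref{le-bound-on-R} directly.

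The only delicate summand, and the main obstacle, is $\phi_t\circ\ka^{-1}$. Via the coordinate change $\partial_t^{\mathrm{Lag}}\mapsto\partial_t+\delta\partial_s$ write it as $\partial_t\tilde\phi+\delta\,\partial_s\tilde\phi$, modulo the reshuffling of the $\mathcal R(u)$ piece across \eqref{eq-x1}; the part $\delta\,\partial_s\tilde\phi$ is controlled by $\delta\in H^{r+1/2}$ and $\partial_s\tilde\phi\in H^r$ (Proposition \ref{prop-error-theta-t}). For $\partial_t\tilde\phi$ we differentiate \eqref{eq-36} in $t$: by \eqref{eq-a12} the two $\operatorname{Re}\mathfrak H$ summands are $\operatorname{Im}\mathcal R$ summands, so together with $\mathcal R(u)$ they are $\mathcal R$ applied to $u$ and to $\theta_t\circ\ka^{-1}$, whose $t$‐derivative produces harmless kernel‐derivative operators $\mathcal R_t$ plus $\mathcal R$ applied to $\partial_t u\in H^{r-2}$ and to $\partial_t(\theta_t\circ\ka^{-1})$ — the latter being $\operatorname{Im}$ of $\partial_t^2\ln z_\alpha$, which by \eqref{eq-a46} and the kinematic identity equals $e^{-i\theta}\partial_s(z_{tt}\circ\ka^{-1})$ minus lower order and hence lies in $H^{r-2}$ (using $\|a-1\|_{H^{r-1}}\le C(\E)$). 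Differentiating the commutator summand of \eqref{eq-36} produces $\theta_t\circ\ka^{-1}\in H^{r-1/2}$ times the same commutator (controlled by \eqref{eq-a38}), a $[\partial_t,\mathfrak H]$‐commutator which by Lemma \ref{le-2} is again a bounded‐kernel commutator, and $[\mathfrak H,e^{2i\theta}]$ applied to a quantity built from $\bar z_{tt}\circ\ka^{-1}\in H^{r-1}$ and the first‐order commutator $[\partial_t,\partial_s]$. In every case the genuinely top‐order contributions $\partial_s^3\theta$ and $a_s$ occur only inside one of the smoothing operators $\mathcal R$, $[\mathfrak H,e^{2i\theta}]$ or $[\,\cdot\,,\mathfrak H]$, each gaining far more than the $\tfrac12$ derivative needed, so they are recovered in $H^{r-3/2}$. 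The hard part is exactly this bookkeeping: verifying, as in \cite[Lemma 3.5]{Ambrose:2003:LWP-vortex-sheet-with-surface-tension}, that the operators obtained from $\mathcal R$, $[\mathfrak H,e^{2i\theta}]$, $[\bar z_t\circ\ka^{-1},\mathfrak H]$ by differentiating their kernels in $t$ retain cancelling singularities (hence remain smoothing), and that $[\partial_t,\mathfrak H]$ and $[\partial_t,\partial_s]$ contribute only lower‐order errors. Summing the seven estimates yields $\|\tilde\omega\|_{H^{k+1/2}}\le C(\E)$ for $0\le k\le r-2$.
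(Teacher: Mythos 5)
Your proposal is correct and follows essentially the same route as the paper's proof: the six summands other than $\phi_t\circ\ka^{-1}$ are dispatched by Lemma \ref{le-algebra}, \eqref{eq-a39} and the smoothing of $\mathcal R$, and the remaining term is split via $\partial_t+\delta\partial_s$, reduced through $\operatorname{Re}\mathfrak H=\operatorname{Im}\mathcal R$, and handled by differentiating the kernels and reusing the commutator/smoothing estimates. Your write-up is in fact more detailed than the paper's, which treats the second and third pieces of $\phi_t\circ\ka^{-1}$ only by analogy.
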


This proposition is contained in \cite[Proposition 2.4]{Ambrose-Masmoudi:2005:zero-surface-tension-2d-water-wave}. For the convenience of reader, we give the proof below. 
\begin{proof}
We recall the expression of $\tilde\omega$ in \eqref{eq-38}. By using Lemma \ref{le-algebra} and \eqref{eq-a39} in Lemma \ref{le-more-error-bounds}, we see that the terms except for $\phi_t\circ \ka^{-1}$ are bounded by $C(\E)$; so we focus on proving
$$ \|\phi_t\circ \ka^{-1}\|_{H^{r-3/2}} \le C(\E),$$
where, for $\phi_t$ defined in \eqref{eq-8} in the Lagrangian coordinate, $\phi_t\circ \ka^{-1}$ takes the following form
\begin{equation}\label{eq-45}
\phi_t\circ \ka^{-1} =(\partial_t +\delta \partial_s) \left(-i\operatorname{Re}\mathfrak{H}(u)+\operatorname{Re}\mathfrak{H}(\theta_t\circ \ka^{-1})+\operatorname{Im}[\mathfrak{H},e^{2i\theta}]\frac {\partial_s(\bar{z}_{t}\circ \ka^{-1})}{\xi_s}\right).
\end{equation}
The derivative $\delta \partial_s$ acting on the terms in the bracket above is bounded by using Lemma \ref{le-algebra} and the fact $u\in H^{r-1/2}$. We investigate the time derivative $\partial_t$ term by term. Firstly we recall that $\operatorname{Re}\mathfrak{H}(f) =\operatorname{Im}\mathcal{R}(f)$ for a real-valued function;
$$ R(u) =\frac {p.v.}{L}\int_0^{L} u\xi_\beta \left( \cot \frac {\pi(\xi(s)-\xi(\beta))}{L}-\frac {1}{\xi_\beta}\cot \frac {\pi(s-\beta)}{L}\right)  d\beta.$$
Applying $\partial_t$, we see that one term is
$$\frac {p.v.}{L}\int_0^{L} u_t\xi_\beta \left( \cot \frac {\pi(\xi(s)-\xi(\beta))}{L}-\frac {1}{\xi_\beta}\cot \frac {\pi(s-\beta)}{L}\right)  d\beta. $$
This term is bounded by $C(\E)$. The estimates on the second term and the third in \eqref{eq-45} can be done similarly. So the proof of Lemma \ref{prop-error-a-s} is complete.
\end{proof}

We recall two lemmas from Ambrose \cite{Ambrose:2003:LWP-vortex-sheet-with-surface-tension}. These will be used in Proposition \ref{prop-error-commutators}. 

\begin{lemma}\cite[Lemma 3.7]{Ambrose:2003:LWP-vortex-sheet-with-surface-tension}\label{le-commutators-1}
For $\psi \in H^r$, the operator $[\mathcal{H},\psi]$ is bounded from $H^0$ to $H^{r-1}$. Also $[\H,\psi]$ is bounded from $H^{-1}$ to $H^{r-2}$. For $i=0$ or $i=-1$, we have 
$$\|[\H,\psi]f\|_{H^{r-1+i}} \le C \|f\|_{H^{i}}\|\psi\|_{H^r}. $$
\end{lemma}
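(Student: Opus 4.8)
Since everything lives on the torus $\T=\R/2\pi\Z$, I would prove this by passing to Fourier series. By \eqref{eq-39} the periodic Hilbert transform $\H$ is the Fourier multiplier with symbol $-i\sgn(n)$, i.e. $\widehat{\H g}(n)=-i\sgn(n)\hat g(n)$ with the convention $\sgn(0)=0$. Writing $[\H,\psi]f=\H(\psi f)-\psi\,\H f$ and expanding both products as convolutions on the Fourier side, one obtains the exact identity
\[
\widehat{[\H,\psi]f}(n)=-i\sum_{k\in\Z}\bigl(\sgn(n)-\sgn(k)\bigr)\,\hat\psi(n-k)\,\hat f(k),
\]
so that everything reduces to the behaviour of the discrete kernel $\sgn(n)-\sgn(k)$. (Equivalently one could start from the singular-integral form $[\H,\psi]f(\alpha)=\tfrac{1}{2\pi}\operatorname{p.v.}\int_0^{2\pi}(\psi(\beta)-\psi(\alpha))f(\beta)\cot\tfrac{\alpha-\beta}{2}\,d\beta$, whose kernel is bounded because $\psi(\beta)-\psi(\alpha)=O(|\alpha-\beta|)$ cancels the singularity of the cotangent; but extracting the full $H^{r-1}$ gain that way forces one to differentiate the kernel and run a Leibniz expansion, so the Fourier computation is shorter and gives the sharp statement directly.)

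The one real point in the argument is an elementary dichotomy for the kernel: $|\sgn(n)-\sgn(k)|\le 2$, and $\sgn(n)-\sgn(k)=0$ unless $n$ and $k$ lie on strictly opposite sides of $0$ (or one of them vanishes); in the latter case $|n-k|=|n|+|k|\ge\max(|n|,|k|)$, hence, with $\langle m\rangle:=(1+|m|^2)^{1/2}$,
\[
\langle n-k\rangle\gtrsim\langle n\rangle\qquad\text{and}\qquad\langle n-k\rangle\gtrsim\langle k\rangle
\]
on the support of the summand. This frequency localization is exactly what makes $[\H,\psi]$ one order smoother than $\H$. Granting it, for $i\in\{0,-1\}$ and $r\ge 4$ (so in particular $r-2\ge 0$) I would estimate
\[
\langle n\rangle^{\,r-1+i}\bigl|\widehat{[\H,\psi]f}(n)\bigr|\;\lesssim\;\sum_{k\in\Z}\langle n-k\rangle^{\,r-1}|\hat\psi(n-k)|\cdot\langle k\rangle^{\,i}|\hat f(k)|\;=\;(g*h)(n),
\]
where $g(m)=\langle m\rangle^{r-1}|\hat\psi(m)|$ and $h(k)=\langle k\rangle^{i}|\hat f(k)|$: for $i=0$ one uses $\langle n\rangle^{r-1}\lesssim\langle n-k\rangle^{r-1}$, and for $i=-1$ one uses $\langle n\rangle^{r-2}\lesssim\langle n-k\rangle^{r-2}$ together with $\langle k\rangle\lesssim\langle n-k\rangle$ to absorb the extra power into $\langle n-k\rangle^{r-1}$.

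Taking $\ell^2_n$ norms and applying Young's inequality for sequences gives $\|[\H,\psi]f\|_{H^{r-1+i}}\lesssim\|g\|_{\ell^1}\,\|h\|_{\ell^2}=\|g\|_{\ell^1}\|f\|_{H^i}$, and finally Cauchy--Schwarz yields $\|g\|_{\ell^1}=\sum_m\langle m\rangle^{-1}\cdot\langle m\rangle^{r}|\hat\psi(m)|\le\bigl(\sum_m\langle m\rangle^{-2}\bigr)^{1/2}\|\psi\|_{H^r}\lesssim\|\psi\|_{H^r}$, the series $\sum_m\langle m\rangle^{-2}$ converging in one dimension. Specializing to $i=0$ and $i=-1$ gives the two stated bounds $\|[\H,\psi]f\|_{H^{r-1}}\lesssim\|\psi\|_{H^r}\|f\|_{H^0}$ and $\|[\H,\psi]f\|_{H^{r-2}}\lesssim\|\psi\|_{H^r}\|f\|_{H^{-1}}$ at once. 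I do not expect a genuine obstacle here; the whole content is the sign-kernel dichotomy and the resulting bound $\langle n-k\rangle\gtrsim\max(\langle n\rangle,\langle k\rangle)$ on its support, everything else being Young and Cauchy--Schwarz.
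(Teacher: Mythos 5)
Your argument is correct and complete. Note, though, that the paper itself supplies no proof of this lemma: it is imported verbatim as \cite[Lemma 3.7]{Ambrose:2003:LWP-vortex-sheet-with-surface-tension}, so there is no in-paper argument to compare against. What you have written is the standard (and, as far as I know, essentially Ambrose's) proof of the commutator smoothing: the identity $\widehat{[\H,\psi]f}(n)=-i\sum_k(\sgn(n)-\sgn(k))\hat\psi(n-k)\hat f(k)$, the observation that the factor $\sgn(n)-\sgn(k)$ forces $|n-k|\ge\max(|n|,|k|)$ on its support, and then Young plus Cauchy--Schwarz. All the steps check out: for $i=0$ you only need $r-1\ge 0$, for $i=-1$ you need $r-2\ge 0$ (guaranteed here since $r\ge 4$), and the $\ell^1$ bound $\sum_m\langle m\rangle^{r-1}|\hat\psi(m)|\lesssim\|\psi\|_{H^r}$ uses the convergence of $\sum_m\langle m\rangle^{-2}$ in one dimension, exactly as you say. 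The only cosmetic remark is that for $f\in H^{-1}$ one should observe that both $\H(\psi f)$ and $\psi\H f$ are a priori defined in $H^{-1}$ (multiplication by $\psi\in H^r$, $r\ge 4$, preserves $H^{-1}$ by duality), so the formal Fourier computation is justified by density; this is routine and does not affect the validity of the proof.
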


\begin{lemma}\cite[Corollary 3.8]{Ambrose:2003:LWP-vortex-sheet-with-surface-tension}\label{le-commutators-2}
For $r\ge 3$ and $\psi \in H^r$, the operator $[\H,\psi]$ is bounded from $H^{r-2}$ to $H^{r}$. For $r\ge 4$ and $\psi \in H^{r-\frac 12}$, the operator $[\H,\psi]$ is bounded from $H^{r-2}$ to $H^{r-\frac 12}$.  For $i=0$ or $i=-\frac 12$, we have 
$$\|[\H,\psi]f\|_{H^{r+i}} \le C \|f\|_{H^{r-2}}\|\psi\|_{H^{r+i}}. $$
\end{lemma}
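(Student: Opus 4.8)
The plan is to prove the commutator estimate directly on the Fourier side, exploiting the structural fact that commuting the periodic Hilbert transform $\H$ with multiplication by a function $\psi$ produces a strongly smoothing operator. Writing $\widehat{\H f}(k)=-i\,\sgn(k)\,\hat f(k)$ on the torus (with the convention $\sgn 0:=0$), one has
$$\widehat{[\H,\psi]f}(k)=-i\sum_{\ell}\bigl(\sgn k-\sgn(k-\ell)\bigr)\,\hat\psi(\ell)\,\hat f(k-\ell).$$
The symbol $m(k,\ell):=\sgn k-\sgn(k-\ell)$ satisfies $|m|\le 2$ and vanishes unless $k$ and $k-\ell$ have opposite signs (one of them possibly $0$); a short case check shows that whenever $m(k,\ell)\ne 0$ one has $|\ell|\ge\max(|k|,|k-\ell|)$, hence $\langle k\rangle\le\langle\ell\rangle$ on the support of $m$.

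Using this cancellation, for $i\in\{0,-\tfrac12\}$ I would bound
$$\langle k\rangle^{r+i}\bigl|\widehat{[\H,\psi]f}(k)\bigr|\le 2\sum_{\ell}\langle\ell\rangle^{r+i}|\hat\psi(\ell)|\,|\hat f(k-\ell)|,$$
and then apply Young's inequality ($\ell^{2}\ast\ell^{1}\hookrightarrow\ell^{2}$) followed by Cauchy--Schwarz to get
$$\|[\H,\psi]f\|_{H^{r+i}}\lesssim\bigl\|\langle\cdot\rangle^{r+i}\hat\psi\bigr\|_{\ell^{2}}\sum_{k}|\hat f(k)|=\|\psi\|_{H^{r+i}}\sum_{k}|\hat f(k)|\lesssim\|\psi\|_{H^{r+i}}\,\|f\|_{H^{r-2}},$$
where the last step uses $\sum_{k}|\hat f(k)|\le C_{\sigma}\|f\|_{H^{\sigma}}$ for any $\sigma>\tfrac12$, applied with $\sigma=r-2\ge1$. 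This yields both assertions simultaneously: the first with $\psi\in H^{r}$ and $i=0$, the second with $\psi\in H^{r-1/2}$ and $i=-\tfrac12$. The hypotheses $r\ge3$ and $r\ge4$ serve only to guarantee $r-2>\tfrac12$ and to be consistent with the rest of the paper; in fact the same argument proves the stronger bound $\|[\H,\psi]f\|_{H^{r+i}}\lesssim\|\psi\|_{H^{r+i}}\|f\|_{H^{1/2+\varepsilon}}$, and Lemma~\ref{le-commutators-2} is just the case needed in the applications.

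An alternative route, closer to how the lemma is used here, would be to bootstrap from Lemma~\ref{le-commutators-1} via the identity $\partial_{s}[\H,\psi]g=[\H,\psi_{s}]g+[\H,\psi]\partial_{s}g$ (valid because $\H$ commutes with $\partial_{s}$). Iterating gives $\partial_{s}^{r}[\H,\psi]f=\sum_{j=0}^{r}\binom{r}{j}[\H,\partial_{s}^{j}\psi]\,\partial_{s}^{r-j}f$; the terms with $j\ge2$ are controlled in $L^{2}$ directly by Lemma~\ref{le-commutators-1} (with ``$r$'' replaced by $r-j$ and input index $0$; the borderline $j=r$ by a plain $L^{2}\times L^{\infty}$ product estimate), while for $j=0,1$ one first commutes the surplus derivatives back outside, e.g.\ $[\H,\psi]\partial_{s}^{2}h=\partial_{s}^{2}[\H,\psi]h-2\partial_{s}[\H,\psi_{s}]h+[\H,\psi_{ss}]h$ with $h=\partial_{s}^{r-2}f\in L^{2}$, so that every resulting commutator acts on an $H^{0}$ (or $H^{-1}$) input and Lemma~\ref{le-commutators-1} applies; the $H^{r-1/2}$ endpoint is handled the same way, or by interpolation. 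I would present the Fourier computation as the main argument, since it is self-contained and outputs the stated inequality at once.

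I do not expect a genuine analytic obstacle in either route; the only points requiring care are the degenerate frequencies $k=0$ and $k-\ell=0$ in the sign analysis of the symbol, and, in the bootstrap variant, the bookkeeping that keeps the derivative count balanced when invoking Lemma~\ref{le-commutators-1} at the several shifted regularity indices.
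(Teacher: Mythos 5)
The paper does not prove this lemma at all --- it is quoted verbatim from \cite[Corollary 3.8]{Ambrose:2003:LWP-vortex-sheet-with-surface-tension} --- so there is no in-paper argument to compare against. Your primary (Fourier-side) argument is correct and self-contained: the identity $\widehat{[\H,\psi]f}(k)=-i\sum_{\ell}(\sgn k-\sgn(k-\ell))\hat\psi(\ell)\hat f(k-\ell)$ is right, the case check showing $|\ell|\ge\max(|k|,|k-\ell|)$ on the support of the symbol (including the degenerate cases $k=0$ and $k=\ell$) is right, and transferring the full weight $\langle k\rangle^{r+i}\le\langle\ell\rangle^{r+i}$ onto $\hat\psi$ followed by Young ($\ell^2\ast\ell^1\to\ell^2$) and $\|\hat f\|_{\ell^1}\lesssim\|f\|_{H^{r-2}}$ (valid since $r-2\ge 1>\tfrac12$) yields exactly the stated bounds, and indeed the stronger one with $\|f\|_{H^{1/2+\eps}}$ on the right. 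This is cleaner and more quantitative than deducing the corollary from Lemma \ref{le-commutators-1} by Leibniz bookkeeping, which is presumably the route in the cited source and which you also sketch correctly as an alternative. The only point worth flagging is cosmetic: in this paper $\H$ is the Hilbert transform on a circle of circumference $L(t)$ rather than $2\pi$, so the multiplier computation should be read in the basis $e^{2\pi iks/L}$; the symbol is still $-i\,\sgn(k)$, the support analysis is unchanged, and the only $L$-dependence enters through the embedding $\ell^1\hookleftarrow$ weighted $\ell^2$, i.e.\ through a constant controlled by quantities already in the energy.
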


We conclude this section with a commutator estimate, which is useful in establishing the energy inequality in Section \ref{sec: energy}. 

\begin{proposition}\label{prop-error-commutators}
For $\delta\in H^{r+1/2}$ and $f\in H^{r-1/2}$ for an integer $r\ge 4$. Then
\begin{equation}\label{eq-52}
\|[\H,\delta] \partial^{r} f\|_{H^{1/2}} \le C \|\delta\|_{H^{r+1/2}} \|f\|_{H^{r-1/2}}.
\end{equation}
\end{proposition}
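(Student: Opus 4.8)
The plan is to reduce \eqref{eq-52} to the two commutator lemmas already stated, Lemma \ref{le-commutators-1} and Lemma \ref{le-commutators-2}, by peeling the derivative $\partial^r$ off the argument one order at a time. First I would write $\partial^r f = \partial (\partial^{r-1} f)$ and use the elementary commutator identity
\[
[\H,\delta]\partial g = \partial\bigl([\H,\delta]g\bigr) - [\H,\delta_s] g,
\]
valid because $\H$ commutes with $\partial_s$ (on periodic functions) and $[\partial,[\H,\delta]]g = [[\partial,\H],\delta]g + [\H,[\partial,\delta]]g = [\H,\delta_s]g$. Applying this with $g=\partial^{r-1}f$ gives
\[
[\H,\delta]\partial^r f = \partial\bigl([\H,\delta]\partial^{r-1} f\bigr) - [\H,\delta_s]\partial^{r-1} f,
\]
so that $\|[\H,\delta]\partial^r f\|_{H^{1/2}} \le \|[\H,\delta]\partial^{r-1}f\|_{H^{3/2}} + \|[\H,\delta_s]\partial^{r-1}f\|_{H^{1/2}}$.

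Next I would estimate the two resulting pieces separately. For the second piece, since $\delta_s = u \in H^{r-1/2}$ and $r-1/2 \ge r-2+\tfrac12$ with $r\ge4$, Lemma \ref{le-commutators-2} (with the half-integer exponent $i=-\tfrac12$, applied to the smooth multiplier $u$ of regularity $H^{(r-1)+? }$) controls $[\H,u]$ from $H^{r-2}$ into a space of order $r-\tfrac12$; combined with Lemma \ref{le-algebra} to absorb $\|u\|_{H^{r-1/2}} \le C\|\delta\|_{H^{r+1/2}}$, and using $\|\partial^{r-1}f\|_{H^{-1/2}} \le \|f\|_{H^{r-3/2}} \le \|f\|_{H^{r-1/2}}$, one gets $\|[\H,\delta_s]\partial^{r-1}f\|_{H^{1/2}} \le C\|\delta\|_{H^{r+1/2}}\|f\|_{H^{r-1/2}}$. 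For the first piece, $\|\partial([\H,\delta]\partial^{r-1}f)\|_{H^{1/2}} = \|[\H,\delta]\partial^{r-1}f\|_{H^{3/2}}$, and I would apply Lemma \ref{le-commutators-2}: with $\delta\in H^{r+1/2}$ and $r\ge4$, $[\H,\delta]$ maps $H^{r-2}$ to $H^{r+1/2}$, and since $\partial^{r-1}f \in H^{-1/2} \subset H^{r-2}$ would require $r-1 - (r-2) = 1$ derivatives of slack — actually $\partial^{r-1}f\in H^{(r-1/2)-(r-1)} = H^{1/2}$, which sits in $H^{r-2}$ only when $r\le 5/2$, so the direct application fails and one must instead iterate the peeling identity once more.

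This is where the main obstacle lies: a single application of the peeling identity does not land $\partial^{r-1}f$ in the domain $H^{r-2}$ demanded by Lemma \ref{le-commutators-2}, so I would iterate. Carrying out the identity a second time on the first piece produces terms $\partial^2([\H,\delta]\partial^{r-2}f)$, $\partial([\H,\delta_s]\partial^{r-2}f)$, and $[\H,\delta_{ss}]\partial^{r-2}f$, and more generally after $j$ steps one obtains a sum of terms of the form $\partial^{\,r-1-m}\bigl([\H,\partial^{m}\delta]\,\partial^{\,m} f\bigr)$ for $0\le m\le r-1$, with binomial-type coefficients — this is just the Leibniz expansion of the commutator. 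Then for each term I choose the appropriate lemma: when the multiplier $\partial^m\delta$ has high regularity (small $m$) I use Lemma \ref{le-commutators-2} to gain enough derivatives, and when $\partial^m f$ has enough regularity (small $r-1-m$, i.e.\ $m$ near $r-1$) the factor $\partial^m f \in H^{r-1/2-m}$ is regular enough that Lemma \ref{le-algebra} plus Lemma \ref{le-commutators-1} or \ref{le-commutators-2} applies. The bookkeeping is: the term indexed by $m$ needs $\|\partial^{r-1-m}([\H,\partial^m\delta]\partial^m f)\|_{H^{1/2}} = \|[\H,\partial^m\delta]\partial^m f\|_{H^{r-1/2-m}}$, with $\partial^m\delta\in H^{r+1/2-m}$ and $\partial^m f\in H^{r-1/2-m}$; since $r+1/2-m$ stays $\ge 1$ throughout the range (as $m\le r-1$) and the two commutator lemmas cover gains of $r-1$ derivatives for a multiplier of order $r$ (resp.\ $r-\tfrac12$) from source $H^{-1}$, $H^{-1/2}$, or $H^{r-2}$, every term is bounded by $C\|\partial^m\delta\|_{H^{r+1/2-m}}\|\partial^m f\|_{H^{r-1/2-m}} \le C\|\delta\|_{H^{r+1/2}}\|f\|_{H^{r-1/2}}$. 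Summing over the finitely many $m$ and the bounded binomial coefficients yields \eqref{eq-52}.

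Rather than tracking the full Leibniz expansion, a cleaner route I would actually prefer is to use Lemma \ref{le-commutators-2} directly on a single representative: since the claim \eqref{eq-52} is symmetric-looking, write $\partial^r f = \partial^{r-2}(\partial^2 f)$ and apply the peeling identity $r-2$ times until the argument is $\partial^2 f \in H^{r-5/2} \subset H^{r-2}$ (valid since $-5/2 \le 0$, i.e.\ $H^{r-5/2}\subset H^{r-2}$ fails — so one stops at $\partial^{j}f$ with $j$ chosen so $r-1/2-j \ge r-2$, i.e.\ $j \le 3/2$, giving $j=1$). Thus in fact one peeling step reduces to estimating $[\H,\partial^m\delta]\partial f$ in $H^{r-3/2-m}$-type norms with $\partial f\in H^{r-3/2}\subset H^{r-2}$ — now this does lie in the domain of Lemma \ref{le-commutators-2}. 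So the correct minimal peeling is: peel exactly $r-1$ times so that only $\partial^1 f$ remains, then each of the $O(1)$-many leftover commutators $[\H,\partial^m\delta]\partial^{\min(m,1)}f$ is handled by Lemma \ref{le-commutators-2} (for the high-order-$\delta$ ones) together with Lemma \ref{le-algebra}. The one delicate point to verify is that at every intermediate order the Sobolev index $r-1/2-m$ of the source stays $\ge -1$ so that Lemma \ref{le-commutators-1}/\ref{le-commutators-2} applies, which holds because $m\le r$ and $r\ge4$. I expect the write-up to be short: state the commutator identity, iterate, invoke the two lemmas term by term, and sum.
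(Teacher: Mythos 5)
Your proposal follows essentially the same route as the paper: there one sets $T=[\H,\delta]$ and uses the product rule $[T,\partial^r]f=\sum_{j=1}^r\partial^{r-j}[T,\partial]\partial^{j-1}f$ with $[T,\partial]=-[\H,\delta_s]$, bounding the fully commuted piece $\partial^r T(f)$ via Lemma \ref{le-commutators-2} and the boundary piece $[\H,\delta_s]\partial^{r-1}f$ via Lemma \ref{le-algebra}, exactly as in your first two paragraphs. One caution on the bookkeeping: your claimed general term $\partial^{\,r-1-m}\bigl([\H,\partial^{m}\delta]\,\partial^{\,m} f\bigr)$ is not what iterating your one-step identity actually produces (the derivative counts do not balance across $m$); the correct expansion keeps the multiplier equal to $\delta_s$ in every intermediate term, namely $\partial^{r-j}[\H,\delta_s]\partial^{j-1}f$, which is precisely why the middle cases $2\le j\le r-1$ go through by Lemma \ref{le-algebra} alone.
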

\begin{proof}
We set
$$ T(g)=[\H,\delta]g =\int \frac {\delta(s)-\delta(y)}{s-y} g(y)dy =:\int K(s,y)g(y)dy,$$
where $K(s,y):=\frac {\delta(s)-\delta(y)}{s-y}$.
Then
\begin{equation}\label{eq-53}
[\H,\delta] \partial^r u = [T,\partial^r] u+\partial^r T(u)=\sum_{j=1}^r \partial^{r-j}[T,\partial] \partial^{j-1} u+\partial^r T(u).
\end{equation}
Firstly, by Lemma \ref{le-commutators-2},
\begin{equation}\label{eq-54}
\|\partial^{r+1/2} T(u) \|_{L^2} \le \|[\H,\delta] u\|_{H^{r+1/2}} \le C \|\delta\|_{H^{r+1/2}} \|u\|_{H^{r-1}}.
\end{equation}
Then for the first summation in \eqref{eq-53}, we only consider $j=1$ and $j=r$ as the other cases are similar.

For $j=1$,
\begin{equation}\label{eq-55}
\| \partial^{1/2} \partial^{r-1}[T,\partial]u \|_{L^2} \le \|[T,\partial]u \|_{H^{r-1/2}} = \|[\H,\partial \delta]u \|_{H^{r-1/2}}\le \|\partial \delta \|_{H^{r-1/2}} \|u\|_{H^{r-2}},
\end{equation}
by using $\partial \delta \in H^{r-1/2}$ and Lemma \ref{le-commutators-2}.

For $j=r$, we consider
\begin{equation}\label{eq-56}
\|[T,\partial]\partial^{r-1} u\|_{H^{1/2}} = \|\H(\partial \delta \partial^{r-1} u) -\partial \delta \H(\partial^{r-1}u)\|_{H^{1/2}} \le C \|\delta\|_{H^{r+1/2}} \|u\|_{H^{r-1/2}}
\end{equation}
by Lemma \ref{le-algebra}. So the proof of Proposition \ref{prop-error-commutators} is complete.
\end{proof}

\section{The \emph{a priori} energy}\label{sec: energy}
We recall the expression of $\E$ in the previous section: for $(\theta, \delta, \gamma)\in H^{r+1}\times H^{r+1/2}\times H^r$,
\begin{equation*}
\eqref{eq-energy}':
\begin{split}
\E(t)&=\|\theta\|_2^2+\|\delta\|_{L^2}^2+\|\gamma\|_{L^2}^2+\|u\|_2^2+L(t)+\sum_{k=1}^{r-1}\langle \partial_s^{k}\gamma, \partial_s^{k}\gamma \rangle + \sum_{k=1}^r \bigl(\E_k^1+\E^2_k+\E^3_k\bigr),
\end{split}
\end{equation*}
where $u=\delta_s$,  $\langle f,\,g \rangle=\int f\bar{g} ds $ and $D=\H\partial_s$,
\begin{equation*}
\eqref{eq-energy-subterms}': \quad
\begin{cases}
\E_k^1(t) & := \frac 12 \left(\langle \partial_s^{k+1} \theta, \partial_s^{k+1} \theta  \rangle +
\langle a \partial_s^{k} \theta, \partial_s^{k} \theta  \rangle + \langle \partial_s^{k-1} u, D\partial_s^{k-1} u \rangle\right),\\
\E_k^2(t)&:= \langle \partial_s^{k-1} u, (10 \|\theta_s(0)\|_{L^\infty_{s}} -\theta_s)\partial_s^{k-1} u \rangle,\\
\E^3_k(t)&:= 10 \|\theta_s(0)\|_\infty \langle \partial_s^{k} \theta, D\partial_s^k \theta  \rangle.
\end{cases}
\end{equation*}
We make the following remarks.
\begin{itemize}
\item[1.] The quantity $\gamma$ denotes the vortex density on the interface, see the relation between the normal velocity of the water and $\gamma$ in Section \ref{sec:serveal-identities} through the Birkhoff-Rott integral and the Biot-Savart law. Note that we assume $\gamma \in H^r$ but only include $\|\gamma\|^2_{H^k}$ up to $r-1$ derivatives in the energy $\E$. This is because $\gamma \in H^r$ can be recovered from the information $\gamma\in H^{r-1}$, $\delta\in H^{r+1/2}$, see the proof of Theorem \ref{thm-energy-inequality}.

\item[2.] We note that $\E^2_k(t)$ includes a factor $10 \|\theta_s(0)\|_{L^\infty_s} -\theta_s$, which is designed to cancel some higher order terms coming out of ${d\E^1_k}/{dt}$. Here $0$ denotes the time zero. If we assume that $\E(t) \le M$ for some $M>0$, then it is nonnegative over a period of time only depending on $M$, which follows from the Sobolev embedding in the time variable that $\partial_t \theta_s$ can be controlled by $\E$ and hence by $M$.
\end{itemize}

\begin{proof}[Proof of Theorem \ref{thm-energy-inequality}] We first record several time derivatives:
\begin{equation}\label{eq-57}
\begin{cases}
\theta_t &= \mathcal{H}(u)-\delta \partial_s \theta +\tilde{\phi},\\
u_t & = \partial_s^3\theta-a\partial_s \theta-\delta \partial_s u+\tilde{\psi},\\
\delta_t &=\theta_{ss}-\delta \delta_s -\sin \theta+U\theta_t-T_t.\\
(I-K^*)(\gamma_t/2) &=[\partial_t, K^*](\gamma/2)+\theta_{ss}-\delta \delta_s -\sin \theta+U\theta_t,\\
\partial_t L(t)&=\int_0^{L(t)} \delta_s ds.
\end{cases}
\end{equation}
Here from Section \ref{sec:serveal-identities}, $T$ denotes the tangential velocity of $\xi_t$ in the arc-length variable and satisfies $T_s= U\theta_s$, and $U$ is the normal velocity defined in \eqref{eq-normal-velocity},
$$ U= z_t \cdot \vec n= \operatorname{Re} \left(\bar{z}_tie^{i\theta}\right) =
\operatorname{Re} \left(\frac {e^{i\theta}}{2\pi } \int \frac {\gamma}{\xi(s)-\xi(\beta)}d\beta \right). $$

Next we establish the claim in Theorem \ref{thm-energy-inequality} in the following steps.

\textbf{Step 1.} By using the equations in \eqref{eq-57}, it is not hard to see that the time derivatives of the first five terms in $\E$ are bounded by $C(\E)$.  For instance,
$$ \langle u,\,u_t\rangle =\langle u,\,\partial_s^3 \theta-a\partial_s\theta-\delta \partial_s u+\tilde{\psi}\rangle
=-\langle u,\, a\partial_s \theta\rangle -\langle u,\, \delta\partial_s u\rangle -C(\E).  $$
The second term above is easier as $\langle u,\, \delta\partial_s u\rangle =\frac 12 \int \delta \partial_s u^2 =-\frac 12 \int u^3 =C(\E)$ by the Sobolev embedding. For the first term, we have
$$\langle u,\, a\partial_s \theta\rangle \le C(\E). $$

\textbf{Step 2.} We show that if $\delta\in H^{r+1/2}$, $\xi_t\in L^2$ and $\gamma\in H^{r-1}$, then $\gamma\in H^r$. Recall that $\delta =(I-K^*)(\gamma/2)-T$, then $\gamma \in L^2$ follows easily.  By induction, we consider the $r$-derivative of $\gamma$. Since $T_s=U\theta_s$,
\begin{equation}\label{eq-65}
(I-K^*)(\partial^r \gamma/2)=\partial^r \delta +\partial^{r-1}(U\theta_s)+[\partial^r,K^*](\gamma/2).
\end{equation}
Since $[K^*, \partial^r]=\sum_{k=1}^r\partial^{r-k}[\partial, K^*]\partial^{k-1}$,  \eqref{eq-normal-velocity} in Section \ref{sec:serveal-identities}, then the right hand side of \eqref{eq-65} will be in $L^2$ as $(\delta, \gamma, \theta) \in H^{r+1/2}\times H^{r-1}\times H^{r+1}$.  So $\partial^r \gamma \in L^2$. This implies that $\gamma \in H^r$ and is bounded by $\E$ if the terms in the definition of $\E$ are bounded.

Then we show that $\sum_{k=1}^{k-1} \langle\partial^k \gamma, \partial^k \gamma_t\rangle \le C(\E).$ Recall the expression of $\gamma_t$ in \eqref{eq-57}, it is bounded by $\E$.

\textbf{Step 3.} We investigate the time derivatives of $\E_k^i (t)$ for each $i=1,2,3$ and given $1\le k\le r$. We first claim,
\begin{equation}\label{eq-31}
\frac {d \E_k^1 (t)}{dt}=-2\langle \partial_s^{k}u, \,\theta_s \partial_s^{k+1} \theta \rangle +C(\E).
\end{equation} Indeed,
\begin{equation}\label{eq-72}
\begin{split}
\frac {d \E_k^1 (t)}{dt} & =\left(\langle \partial_s^{k+1} \theta, \partial_s^{k+1} \theta_t  \rangle +
\langle a\partial_s^{k} \theta, \partial_s^{k} \theta_t  \rangle\right) + \langle \partial_s^{k-1} u, D\partial_s^{k-1} u_t \rangle+C(\E)\\
&=:I+II+C(\E),
\end{split}
\end{equation}
where we have used the bound on $a_t$. By using the equation for $\theta_t=-\delta \theta_s+ H(u)+\tilde\phi$,
\begin{equation}\label{eq-68}
\begin{split}
I& =\langle\partial_s^{k+1}\theta, \partial_s^{k+1}(-\delta \theta_s) \rangle +\langle\partial_s^{k+1}\theta, \partial_s^{k+1}H(u) \rangle\\
&\qquad +\langle a\partial_s^{k}\theta, \partial_s^{k}(-\delta \theta_s) \rangle +\langle a\partial_s^{k}\theta, \partial_s^{k}H(u) \rangle+C(\E)\\
&=\langle\partial_s^{k+1}\theta, \partial_s^{k+1}(-\delta \theta_s) \rangle +\langle\partial_s^{k+1}\theta, \partial_s^{k+1}H(u) \rangle +C(\E),
\end{split}
\end{equation} by using the bound on $\phi$ and $a$. For the first term in \eqref{eq-68},
\begin{equation}\label{eq-69}
\begin{split}
\langle\partial_s^{k+1}\theta, \partial_s^{k+1}(-\delta \theta_s) \rangle
&=-\langle\partial_s^{k+1}\theta, \partial_s^k u \theta_s \rangle- \langle\partial_s^{k+1}\theta, \delta \partial_s^{k+2}\theta \rangle+C(\E)\\
&=-\langle\partial_s^{k+1}\theta, \partial_s^k u \theta_s \rangle+C(\E)
\end{split}
\end{equation} because,  by integration by parts,
\begin{equation}\label{eq-70}
\langle\partial_s^{k+1}\theta, \delta \partial_s^{k+2}\theta \rangle=-\langle\partial_s^{k+2}\theta, \delta \partial_s^{k+1}\theta \rangle+C(\E).
\end{equation}Hence we rewrite \eqref{eq-68} as
\begin{equation}\label{eq-71}
I=-\langle\partial_s^{k+1}\theta, \partial_s^k u \theta_s \rangle+\langle \partial_s^{k+1}\theta, D \partial_s^{k}u\rangle +C(\E).
\end{equation}
We continue $II$ in \eqref{eq-72}. Recalling that $u_t=\theta_{sss}-a \theta_s-\delta u_s+\tilde\psi$,
\begin{equation}\label{eq-73}
\begin{split}
II&=\langle \partial_s^{k-1} u, D\partial_s^{k-1}\bigl( \theta_{sss}-a \theta_s-\delta u_s \bigr) \rangle+C(\E)\\
& =\langle \partial_s^{k-1} u, D\partial_s^{k+2}\theta \rangle -\langle \partial_s^{k-1} u,D\partial_s^{k-1}(a \theta_s)\rangle -\langle \partial_s^{k-1} u, D\partial_s^{k-1}(\delta u_s) \rangle+C(\E)\\
&=-\langle \partial_s^{k+1}\theta, D \partial_s^{k}u\rangle- \langle \partial_s^{k-1} u,D\partial_s^{k-1}(a \theta_s)\rangle+C(\E),
\end{split}
\end{equation} because the third term in the second line of \eqref{eq-73} equals
 \begin{equation}\label{eq-b21}
\begin{split}
-\langle \partial_s^{k-1} u, D\partial_s^{k-1}(\delta u_s) \rangle&=\langle H\partial_s^{k-1}u,\delta \partial_s^{k+1}u \rangle+C(\E)\\
&=-\langle H\partial_s^k u, \delta \partial_s^k u\rangle -\langle H\partial_s^{k-1} u, \delta_s \partial_s^k u \rangle +C(\E) \\
&=\langle \partial_s^k u, [H,\delta] \partial_s^k u\rangle +\langle \partial_s^k u, \delta H \partial_s^k u \rangle +C(\E)
\end{split}
\end{equation}
since $|\langle H\partial_s^{k-1} u, \delta_s \partial_s^k u \rangle| \le C(\E)$ by Lemma \ref{le-algebra}.  The equation $-\langle H\partial_s^k u, \delta \partial_s^k u\rangle =\langle \partial_s^k u, [H,\delta] \partial_s^k u\rangle +\langle \partial_s^k u, \delta H \partial_s^k u \rangle $ implies that 
\begin{equation}\label{eq-b22}
\langle \partial_s^{k-1} u, D\partial_s^{k-1}(\delta u_s) \rangle =-\frac 12\langle \partial_s^k u, [H,\delta] \partial_s^k u\rangle,
\end{equation} which is bounded by $C(\E)$ by Proposition \ref{prop-error-commutators}. 

For the second term in the third line of \eqref{eq-73}, it equals
\begin{equation}
\begin{split}
&\langle \partial_s^{k-1} u,H (\partial_s^k a \theta_s+ a \partial_s^{k+1}\theta)\rangle+C(\E)\\
&=\langle \partial_s^{k-1} u,H (\partial_s^{k-1} a_s \theta_s)\rangle+C(\E)\\
&=\langle \partial_s^{k-1} u,H (\partial_s^{k-1} \bigl( H(u_t)+\tilde\omega \bigr)\theta_s)\rangle+C(\E)\\
&=-\langle \partial_s^{k-1} u, \partial_s^{k+2}\theta\theta_s \rangle+C(\E) \\
&=\langle \partial_s^k u, \partial_s^{k+1}\theta\theta_s \rangle+C(\E)
\end{split}
\end{equation}where we have used the bound on $\omega$. Thus we see that
\begin{equation}\label{eq-74}
\frac {d\E^1(t)}{dt} =I+II+C(\E)=- 2\langle \partial_s^k u, \theta_s \partial_s^{k+1}\theta \rangle+C(\E).
\end{equation}

Next we consider $\frac {d \E_k^2 (t)}{dt}$  and $\frac {d \E_k^3 (t)}{dt}$, which are easier to handle.
\begin{equation}\label{eq-32}
\begin{split}
\frac {d \E_k^2 (t)}{dt}&=2\langle \partial_s^{k-1}u, \,(10 \|\theta_s(0)\|_{L^\infty_{s}} -\theta_s)\partial_s^{k+2}\theta \rangle +C(\E)\\
&=-2\langle \partial_s^{k}u, \,(10 \|\theta_s(0)\|_{L^\infty_s} -\theta_s)\partial_s^{k+1}\theta\rangle +C(\E).
\end{split}
\end{equation}
Thus
\begin{equation}\label{eq-33}
\frac {d \E_k^1 (t)}{dt}+\frac {d \E_k^2 (t)}{dt}=20 \|\theta_s(0)\|_{L^\infty_s} \langle \partial_s^{k-1}u, \partial_s^{k+2} \theta\rangle  +C(\E).
\end{equation}
We compute
\begin{equation}\label{eq-34}
\begin{split}
\frac {d \E_k^3 (t)}{dt}&=20\|\theta_s(0)\|_{L^\infty_s} \langle D\partial_s^k \theta_t, \partial_s^k\theta \rangle\\
&=-20\|\theta_s(0)\|_{L^\infty_s}\langle \partial_s^{k-1}u, \partial_s^{k+2}\theta \rangle +C(\E).
\end{split}
\end{equation}
To conclude,
\begin{equation}\label{eq-35}
\frac {d \E_k^1 (t)}{dt}+\frac {d \E_k^2 (t)}{dt}+\frac {d \E_k^3 (t)}{dt}=C(\E).
\end{equation}
Hence the proof of Theorem \ref{thm-energy-inequality} is complete.
\end{proof}

\section{Several useful identities}\label{sec:serveal-identities}
In this section we list several useful identities on the relation about $\gamma$, $\delta$, $\theta$ and $z_t\circ \ka^{-1}$, from which we can deduce useful estimates on one quantity in terms of others. Then we discuss the relation of $\xi_t$ with the other quantities in the energy $\E$.

We introduce the following notation: if $\xi$ denotes the points on the interface $\Sigma(t)$ in the arc length coordinate, then
\begin{equation}\label{eq-b12}
\xi_t =U \vec n+T \vec t, \text{ where }\vec t =e^{i\theta}, \, \vec n =i\vec t =ie^{i\theta},
\end{equation}
where $T=\operatorname{Re} (\xi_t e^{-i\theta})=\xi_t \cdot e^{i\theta}$ denotes the tangential velocity of the interface in the arc length coordinate, and $U$ is the normal velocity.
\begin{lemma}\label{le-biot-savart}
Let $\gamma(\beta,t)$ denotes the vorticity density on $\Sigma(t)$. Then

\begin{equation}\label{eq-biot-savt}
 \bar{z}_t\circ \ka^{-1} (s,t)  =\frac {\operatorname{p.v.}}{2\pi i} \int \frac {\gamma(\beta, t)}{\xi(s,t)-\xi(\beta,t)} d\beta+ \frac {\gamma}{2} e^{-i\theta(s,t)},
\end{equation}

\begin{equation}\label{eq-tang-velocty}
\operatorname{Re}(\bar{z}_t\circ \ka^{-1} e^{i\theta}) =(I-\mathcal{K}^*)(\frac \gamma 2),
\end{equation}

\begin{equation}\label{eq-aa25}
\delta =(z_t\circ\ka^{-1}-\xi_t)e^{-i\theta}=\operatorname{Re} (\bar{z}_t\circ \ka^{-1}e^{i\theta})-T=(I-\mathcal{K}^*)\bigl(\frac {\gamma}{2} \bigr) -T,
\end{equation}

\begin{equation}\label{eq-normal-velocity}
U= z_t \cdot \vec n= \operatorname{Re} \left(\bar{z}_tie^{i\theta}\right) =
\operatorname{Re} \left(\frac {e^{i\theta}}{2\pi } \int \frac {\gamma}{\xi(s)-\xi(\beta)}d\beta \right),
\end{equation}
and
\begin{equation}\label{eq-tangential-derivative}
T_s=\theta_s U.
\end{equation}
\end{lemma}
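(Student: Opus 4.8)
The plan is to obtain the Biot--Savart representation \eqref{eq-biot-savt} first and then read off the remaining four identities from it by elementary manipulations with $e^{\pm i\theta}$ and real parts. Here $\gamma$ is the vortex sheet strength on $\Sigma(t)$, and \eqref{eq-biot-savt} is the classical Birkhoff--Rott/Biot--Savart formula for a vortex sheet with vacuum above (see \cite{Baker-Meiron-Orszag:1982:generalized-vortex-sheet, HLS:1994:removing-stiffness-interfacial-flows-with-surface-tension, Ambrose:2003:LWP-vortex-sheet-with-surface-tension, Ambrose-Masmoudi:2005:zero-surface-tension-2d-water-wave}). Recall that the second equation of \eqref{eq:complex-coordinate}, $\bar z_t = \mathfrak H\bar z_t$, says exactly that $\bar z_t\circ\ka^{-1}$ is the boundary value on $\Sigma(t)$ of the conjugate fluid velocity, holomorphic in $\Omega(t)$ and decaying at infinity. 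One writes the Cauchy integral $\frac{1}{2\pi i}\int\gamma(\beta)(\xi(s)-\xi(\beta))^{-1}\,d\beta$, which by the Plemelj jump relations has the two one-sided boundary values differing by the tangential jump $\gamma e^{-i\theta}$ and the principal value integral as their average; the water-side value is then the average plus half the jump, which is \eqref{eq-biot-savt}. If one wants a self-contained argument, one may instead \emph{define} $\gamma$ (real) by \eqref{eq-tang-velocty}, using that $I-\mathcal K^*$ is invertible (Theorem \ref{thm-Verchota}, extended to the periodic slab via Remark \ref{re-5}); then, writing $\mathfrak H$ in the arc-length variable as in \eqref{eq-43} and using the involution property $\mathfrak H^2=\mathrm{Id}$, one checks that $G:=\tfrac12(I+\mathfrak H)(\gamma e^{-i\theta})$ satisfies $\mathfrak H G=G$ and $\operatorname{Re}(Ge^{i\theta})=\operatorname{Re}(\bar z_t\circ\ka^{-1}e^{i\theta})$, so that $G-\bar z_t\circ\ka^{-1}$ is a holomorphic boundary value on which the real part of the tangential derivative of the primitive vanishes; a maximum-principle uniqueness argument in $\Omega(t)$ then forces $G=\bar z_t\circ\ka^{-1}$, which is \eqref{eq-biot-savt} after rewriting the Cauchy integral as $\tfrac12\mathfrak H(\gamma e^{-i\theta})$.

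\textbf{From \eqref{eq-biot-savt} to \eqref{eq-tang-velocty} and \eqref{eq-aa25}.} Multiplying \eqref{eq-biot-savt} by $e^{i\theta}$ and taking real parts, the local term contributes $\operatorname{Re}(\gamma/2)=\gamma/2$ while the principal value term equals $-\mathcal K^*(\gamma/2)$ by the identity $\mathcal K^*f=-\operatorname{Re}(e^{i\theta}\mathfrak H(e^{-i\theta}f))$ from \eqref{eq-a4}; this gives \eqref{eq-tang-velocty}. For \eqref{eq-aa25}, differentiate $z(\alpha,t)=\xi(\ka(\alpha,t),t)$ to get $z_t\circ\ka^{-1}=\delta\,\xi_s+\xi_t$ (equivalently \eqref{eq-b1}), so $z_t\circ\ka^{-1}-\xi_t=\delta e^{i\theta}$ is purely tangential and $(z_t\circ\ka^{-1}-\xi_t)e^{-i\theta}=\delta$; writing this as $\operatorname{Re}(\bar z_t\circ\ka^{-1}e^{i\theta})-\operatorname{Re}(\bar\xi_t e^{i\theta})=\operatorname{Re}(\bar z_t\circ\ka^{-1}e^{i\theta})-T$ and invoking \eqref{eq-tang-velocty} yields \eqref{eq-aa25}.

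\textbf{From \eqref{eq-biot-savt} to \eqref{eq-normal-velocity}, and \eqref{eq-tangential-derivative}.} By definition $U=z_t\cdot\vec n=\operatorname{Re}(\bar z_t\,i e^{i\theta})$. Multiplying \eqref{eq-biot-savt} by $ie^{i\theta}$ and taking real parts, the local jump term gives $\operatorname{Re}(i\gamma/2)=0$ (a tangential jump does not affect the normal velocity) and the remaining term is $\operatorname{Re}\big(\tfrac{e^{i\theta}}{2\pi}\operatorname{p.v.}\!\int\gamma(\beta)(\xi(s)-\xi(\beta))^{-1}d\beta\big)$, which is \eqref{eq-normal-velocity}. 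Finally, for \eqref{eq-tangential-derivative} write $T=\xi_t\cdot\xi_s$ (from \eqref{eq-b12}) and differentiate in $s$: $T_s=\xi_{ts}\cdot\xi_s+\xi_t\cdot\xi_{ss}$; the first term is $\tfrac12\partial_t|\xi_s|^2=0$ since $|\xi_s|\equiv1$, and since $\xi_{ss}=\partial_s e^{i\theta}=i\theta_s e^{i\theta}=\theta_s\vec n$ the second term is $\theta_s(\xi_t\cdot\vec n)=\theta_s U$, so $T_s=\theta_s U$.

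\textbf{Main obstacle.} Every step after the representation is bookkeeping with $e^{\pm i\theta}$, real parts and the product rule. The substantive point is establishing \eqref{eq-biot-savt} with a genuinely real density $\gamma$: this relies on the Cauchy integral calculus on the interface --- the Plemelj jump formulas, the involution $\mathfrak H^2=\mathrm{Id}$, the invertibility of $I\pm\mathcal K^*$ (Theorem \ref{thm-Verchota} and Remark \ref{re-5}), and, for the uniqueness step, the behaviour of bounded harmonic functions in the unbounded (or periodic) fluid domain. In practice one simply invokes the vortex-sheet formulation of the water wave problem, where \eqref{eq-biot-savt} is standard, and then all the identities in the statement are immediate consequences.
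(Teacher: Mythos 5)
Your proposal follows essentially the same route as the paper: take the Biot--Savart/Birkhoff--Rott representation for the conjugate velocity, pass to the boundary via the Plemelj jump to get \eqref{eq-biot-savt}, then read off \eqref{eq-tang-velocty}, \eqref{eq-aa25}, \eqref{eq-normal-velocity} by multiplying by $e^{i\theta}$ or $ie^{i\theta}$, taking real parts, and using \eqref{eq-a4} and the chain rule $z_t\circ\ka^{-1}=\delta\,\xi_s+\xi_t$. The only cosmetic difference is in \eqref{eq-tangential-derivative}, where you differentiate $T=\xi_t\cdot\xi_s$ directly and use $|\xi_s|\equiv1$, whereas the paper differentiates the decomposition $\xi_t=U\vec n+T\vec t$ and projects onto $\vec t$; both are the same Frenet-type computation and both are correct.
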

\begin{proof}
From the Biot-Savart law, for any $\xi\in \Omega(t)$,
\begin{equation}\label{eq-a18}
\bar{\nu}(\xi,t) =\frac {\operatorname{p.v.}}{2\pi i} \int \frac {\gamma}{\xi-\xi(\beta)} d\beta,
\end{equation}  where $\beta$ denotes the arc-length variable for $\Sigma(t)$, and $\nu$ denotes the Lagrangian velocity of the fluid in $\Omega(t)$. Then if letting $\xi\to \xi(s)$ nontangentially, then we see that
\begin{equation}\label{eq-a19}
\bar{z}_t\circ \ka^{-1}(s) =\frac {\operatorname{p.v.}}{2\pi i} \int \frac {\gamma}{\xi(s)-\xi(\beta)}d\beta +\frac {\gamma}{2} e^{-i\theta}.
\end{equation} Hence \eqref{eq-biot-savt} follows.

Multiplying both sides by $e^{i\theta}$ and then taking the real parts, we see that, from the definition of double layered potential operator,
 \begin{equation}\label{eq-a20}
\operatorname{Re} \left(\bar{z}_t\circ \ka^{-1}e^{i\theta}\right) = -\operatorname{Re}\left( \frac {\operatorname{p.v.}}{\pi } \int \frac {\gamma}{2}\frac {\vec n(s)}{\xi(s)-\xi(\beta)}d\beta \right)+\frac {\gamma}{2} =(I-\mathcal{K}^*)\bigl(\frac {\gamma}{2} \bigr).
\end{equation} Thus \eqref{eq-tang-velocty} follows.

For \eqref{eq-aa25}, we recall that $z(\alpha,t)=\xi(\ka(\xi,t),t)$;  hence
\begin{equation}\label{eq-a24}
\ka_t\circ \ka^{-1}e^{i\theta}=z_t\circ \ka^{-1}-\xi_t,
\end{equation} then
\begin{equation}\label{eq-a25}
\delta =(z_t\circ\ka^{-1}-\xi_t)e^{-i\theta}=\operatorname{Re} (\bar{z}_t\circ \ka^{-1}e^{i\theta})-T=(I-\mathcal{K}^*)\bigl(\frac {\gamma}{2} \bigr) -T.
\end{equation} Thus \eqref{eq-aa25} follows.

From \eqref{eq-a24}, we know that the normal velocity is invariant under the coordinate change, i.e., $U=\operatorname{Re}\left(\bar{z}_t\circ \ka^{-1} \vec{n} \right)$ with $\vec{n}=ie^{i\theta}$. Thus \eqref{eq-normal-velocity} follows.

Let us derive \eqref{eq-tangential-derivative}. We differentiate both sides of the equation $\xi_t =U \vec n+T \vec t $ to obtain
$$ \theta_t \vec n =U_s \vec n-U\theta_s \vec t+T_s \vec t+T\theta_s\vec n.$$
Then \eqref{eq-tangential-derivative} follows from multiplying it by $\vec t$.
\end{proof}

Next we will derive equations for  $\delta_t$ and $\gamma_t$.

\begin{lemma}\label{le-3}
\begin{align}
\label{eq-b14} \delta_t& =\theta_{ss}-\delta \delta_s -\sin \theta+U\theta_t-T_t.\\
\label{eq-b15} (I-K^*)(\gamma_t/2) &=[\partial_t, K^*](\gamma/2)+\theta_{ss}-\delta \delta_s -\sin \theta+U\theta_t.
\end{align}
\end{lemma}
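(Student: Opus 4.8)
The plan is to route everything through the boundary velocity $w:=z_t\circ\ka^{-1}$ and the arc-length form of the Euler equation. First I would write down two representations of $w$. From \eqref{eq-a24} we have $w=\xi_t+\delta e^{i\theta}$, and from \eqref{eq-b12} we have $\xi_t=iUe^{i\theta}+Te^{i\theta}$, so that
$$w=(T+\delta+iU)e^{i\theta}.$$
On the other hand, since under the change to arc-length $\partial_t\mapsto\partial_t+\delta\partial_s$ (equivalently $\partial_t\ka^{-1}=-\delta\,\partial_s\ka^{-1}$), the acceleration satisfies $z_{tt}\circ\ka^{-1}=(\partial_t+\delta\partial_s)w$; and rewriting \eqref{eq-9} in the arc-length coordinate (using $\partial_\alpha/|z_\alpha|\mapsto\partial_s$ and $P\mapsto-\theta_s$) gives $z_{tt}\circ\ka^{-1}+i=(\theta_{ss}+ia)e^{i\theta}$.

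For \eqref{eq-b14} I would then equate these: substitute $w=(T+\delta+iU)e^{i\theta}$ into $(\partial_t+\delta\partial_s)w+i=(\theta_{ss}+ia)e^{i\theta}$, multiply by $e^{-i\theta}$, and expand using $(\partial_t+\delta\partial_s)e^{i\theta}=i(\theta_t+\delta\theta_s)e^{i\theta}$ together with $e^{-i\theta}=\cos\theta-i\sin\theta$. Taking the real part yields
$$T_t+\delta_t+\delta T_s+\delta\delta_s-U(\theta_t+\delta\theta_s)=\theta_{ss}-\sin\theta.$$
Now \eqref{eq-tangential-derivative} gives $T_s=\theta_s U$, so $\delta T_s$ cancels $-U\delta\theta_s$ and we are left with $T_t+\delta_t+\delta\delta_s-U\theta_t=\theta_{ss}-\sin\theta$, which is \eqref{eq-b14}. (The imaginary part is an identity for $U_t$ in terms of $a$ and lower-order quantities; it is not needed here.)

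For \eqref{eq-b15} I would differentiate \eqref{eq-aa25} in the form $(I-\mathcal{K}^*)(\gamma/2)=\delta+T$ in $t$. Since the kernel of $\mathcal{K}^*$ depends on $\xi(s,t)$, commuting the time derivative past $\mathcal{K}^*$ gives $\partial_t\bigl[(I-\mathcal{K}^*)(\gamma/2)\bigr]=(I-\mathcal{K}^*)(\gamma_t/2)-[\partial_t,\mathcal{K}^*](\gamma/2)$, hence
$$(I-\mathcal{K}^*)(\gamma_t/2)=[\partial_t,\mathcal{K}^*](\gamma/2)+\delta_t+T_t.$$
Substituting \eqref{eq-b14} for $\delta_t$, the two occurrences of $T_t$ cancel, and we obtain \eqref{eq-b15}.

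The computation is mostly bookkeeping; the step I expect to require the most care is the passage to arc-length in the Euler equation and the product expansion in the second paragraph — in particular getting the sign in $z_{tt}\circ\ka^{-1}=(\partial_t+\delta\partial_s)w$ and in $P\mapsto-\theta_s$ correct, and then isolating the real part cleanly. The only conceptual point, which is what makes \eqref{eq-b14} take such a clean form, is the cancellation $\delta T_s=U\delta\theta_s$ forced by the geometric identity \eqref{eq-tangential-derivative}; once that is in place the rest is algebra.
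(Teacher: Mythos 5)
Your argument is correct and follows the paper's plan: compute the material time derivative of the tangential component of $z_t\circ\ka^{-1}$, substitute the arc-length Euler equation projected onto $\vec t$, and for the second identity differentiate $(I-\mathcal{K}^*)(\gamma/2)=\delta+T$ in time and commute $\partial_t$ past $\mathcal{K}^*$. The only minor divergence is cosmetic: you decompose $w=(T+\delta+iU)e^{i\theta}$ and invoke $T_s=U\theta_s$ for the key cancellation of $\delta T_s$ against $U\delta\theta_s$, whereas the paper obtains the same cancellation by computing $\partial_s(\partial_t+\delta\partial_s)\xi=\theta_t\vec n+\delta_s\vec t+\delta\theta_s\vec n$ directly via $\partial_s\xi_t=\partial_t\xi_s=i\theta_t e^{i\theta}$, without ever invoking \eqref{eq-tangential-derivative} at that stage.
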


\begin{proof}
To show \eqref{eq-b14}, we recall that, by definition, $\delta=(z_{t}\circ \ka^{-1}-\xi_t)\cdot \vec t$, i.e.,
$$ \delta = \operatorname{Re} \bigl( \bar{z}_t\circ\ka^{-1}e^{i\theta}\bigr) -T. $$
So we need to establish
\begin{equation}\label{eq-b13}
\partial_t \operatorname{Re} \bigl( \bar{z}_t\circ\ka^{-1}e^{i\theta}\bigr) =\theta_{ss}-\delta \delta_s -\sin \theta+U\theta_t.
\end{equation}
Since $\partial_t (\ka^{-1})=-\dfrac {\ka_t\circ \ka^{-1}}{\ka_\alpha\circ \ka^{-1}}=-\dfrac {\delta}{\ka_\alpha\circ \ka^{-1}}$,
\begin{equation}\label{eq-b16}
\begin{split}
 \partial_t \operatorname{Re} \bigl( \bar{z}_t\circ\ka^{-1}e^{i\theta}\bigr) & =\left(\frac {z_{t\alpha}}{|z_\alpha|}\circ \ka^{-1}\delta +z_{tt}\circ \ka^{-1}\right)\cdot \vec{t}+\operatorname{Re}(\bar{z}_t\circ \ka^{-1}\vec n)\theta_t\\
 &=\left(-\delta \partial_s(\partial_t+\delta\partial_s)\xi +z_{tt}\circ \ka^{-1}\right)\cdot \vec{t}+U\theta_t.
 \end{split}
 \end{equation}
From $\partial \xi=e^{i\theta}$, we have
\begin{equation}\label{eq-b17}
\partial_s(\partial_t+\delta\partial_s)\xi =\theta_t \vec n+ \delta_s \vec t+\delta \theta_s \vec n.
\end{equation}
This implies that $\left(-\partial_s(\partial_t+\delta\partial_s)\xi\right)\cdot \vec{t} =-\delta \delta_s.$
On the other hand, since $z_{tt}\circ \ka^{-1}=-i-\nabla P$ and $\nabla P\cdot \vec t=\partial_s P$,
\begin{equation}\label{eq-b18}
\left(z_{t}\circ \ka^{-1}\right)\cdot \vec{t}=-\sin \theta+\theta_{ss}.
\end{equation}
Thus \eqref{eq-b13} follows from \eqref{eq-b17} and \eqref{eq-b18}.

Finally \eqref{eq-b15} follows from \eqref{eq-b13} and the identity $\operatorname{Re} \bigl( \bar{z}_t\circ\ka^{-1}e^{i\theta}\bigr) =(I-K^*)\frac {\gamma}{2}$.
\end{proof}

\end{document}